\newtheorem{thm}{Theorem}[section]
\newtheorem{Pro}{Proposition}[section]
\newtheorem{lemma}{Lemma}[section]
\newtheorem{Cor}{Corollary}[section]
\newtheorem{Mythm}{Theorem}
\theoremstyle{definition}
\newtheorem{Def}{Definition}[section]
\theoremstyle{definition}
\newtheorem{Rem}{Remark}[section]
\numberwithin{equation}{section}
\begin{document}
\title[]{Structures of Moduli Spaces of \\Generalized Cantor Sets in $(0, 1)^{\mathbb N}$}
\author{
  Hiroshige Shiga    
}
\address{Emeritus Professor at Tokyo Institute of Technology (Science Tokyo) \\
and
Osaka Central Advanced Mathematical Institute \\
3-3-138\ Sugimoto, Sumiyoshi-ku \ Osaka 558-8585 \ Japan
} 
\email{shiga.hiroshige.i35@kyoto-u.jp}
\date{\today}    
\keywords{Cantor set, quasiconformal mapping}
\subjclass[2020]{Primary 30C62; Secondary 30F25}

\begin{abstract}
For each $\omega\in (0, 1)^{\mathbb N}$, we may construct a Cantor set $E(\omega)\subset [0, 1]$ called a generalized Cantor set for $\omega$.
We study the moduli space of $\omega$ denoted by $\mathcal M(\omega)\subset (0, 1)^{\mathbb N}$.
It is the set of $\omega'$ so that $E(\omega')$ is quasiconformally equivalent to $E(\omega)$.
In this paper, we show that the set $\mathcal M(\omega)$ is measurable in $(0, 1)^{\mathbb N}$ and we give a necessary condition for $\omega'$ to belong to $\mathcal M(\omega)$.
 By using this condition, we show that there are uncountably many moduli spaces in $(0, 1)^{\mathbb N}$.
We also show that except for at most one moduli space, the volume of the moduli space with respect to the standard product measure of $(0, 1)^{\mathbb N}$ vanishes.
\end{abstract}
\maketitle
\section{Introduction and  main results}
Let $\mathcal C$ be a Cantor set in the complex plane $\mathbb C$.
It is a totally disconnected perfect set in $\mathbb C$.
We consider the moduli space $\mathcal M_{\mathcal C}$ of $\mathcal C$, the set of Cantor sets $\mathcal C'$ which are quasiconformally equivalent to $\mathcal C$.
Here, we say that a Cantor set $\mathcal C'$ in $\mathbb C$ is quasiconformally equivalent to $\mathcal C$  if there exists a quasiconformal mapping $\varphi :{\mathbb C}\to {\mathbb C}$ such that $\varphi (\mathcal C')=\mathcal C$.

In a series of papers (\cite{ShigaSchottky}, \cite{Shigadynamical}, \cite{ShigaTohoku}) ,  we have studied the quasiconformal equivalence of Cantor sets and their moduli spaces.
For example, we show that the limit set of a Schottky group and Cantor Julia sets of hyperbolic rational functions belong to the moduli space of the standard middle one third Cantor set $\mathcal C_0$.
Moreover, we show that the limit set of a generalized Schottky group with a rank one parabolic does not belong to the moduli space of $\mathcal C_0$.
Recently, R. Hidalgo (\cite{Hidalgo})  shows that the number of  the moduli spaces of Cantor sets generated by finitely generated Kleinian groups is at most four.

In this paper, we consider generalized Cantor sets $E(\omega)$ which are defined by $\omega =(q_n)_{n=1}^{\infty}\in \Omega:= (0, 1)^{\mathbb N}$.
The construction of $E(\omega)$ is a generalization of that of $\mathcal C_0$ as follows.
To construct $E(\omega)$ for $\omega =(q_n)_{n=1}^{\infty}\in (0, 1)^{\mathbb N}$, we start with the closed unit interval $I=[0, 1]$.
At the first step, we remove an open interval $J_1^1$ of length $q_1(=q_1|I|)$ from $I$ so that $I\setminus J_1^1$ consists of two closed intervals $I_1^1$ and $I_1^2$ of the same length.
We put $E_1(\omega)=I_1^1\cup I_1^2$.
We repeat this operation inductively.
Namely, for $E_{n-1}(\omega)=\cup_{j=1}^{2^{n-1}}I_{n-1}^j$, we remove open intervals $J_n^{2j-1}$ of the length $q_n |I_{n-1}^{j}|$ from $I_{n-1}^j$ so that $I_{n-1}^{j}\setminus J_{n}^{2j-1}$ consists of two closed intervals $I_{n}^{2j-1}$ and $I_{n}^{2j}$ of the same length $(j=1, \dots , 2^{n-1})$.
Thus, we obtain $E_{n}(\omega)=\cup_{j=1}^{2^n}I_{n}^{j}$ and $I\setminus E_n(\omega)=\cup_{j=1}^{2^n-1}J_n^{j}$, where $J_n^{2j}=J_{n-1}^{j}$.
Then, $E(\omega):=\cap_{n=1}^{\infty}E_n(\omega)$ is a Cantor set and we call it the generalized Cantor set for $\omega=(q_n)_{n=1}^{\infty}\in \Omega$.
Therefore, the set of generalized Cantor sets is identified with $\Omega$.

For a given $\omega\in \Omega$, we consider  the moduli space of $\omega$ denoted by $\mathcal M(\omega)$.
It is the set of $\omega'\in \Omega$ such that $E(\omega')$ is quasiconformally equivalent to $E(\omega)$.
In this paper, we study the structure of $\mathcal M(\omega)$ in $\Omega$.
First, we show the following:
\begin{Mythm}

\label{Thm:mesurable}
	Under the standard product topology of $\Omega$ $(=(0, 1)^{\mathbb N})$, the moduli space $\mathcal M(\omega)$ is an $F_{\sigma}$ set. In particular, it is a Borel measurable set in $\Omega$.
\end{Mythm}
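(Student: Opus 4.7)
The plan is to write $\mathcal M(\omega) = \bigcup_{K \in \mathbb N} \mathcal M_K(\omega)$, where $\mathcal M_K(\omega)$ denotes the set of $\omega' \in \Omega$ for which there exists a $K$-quasiconformal map $\varphi : \mathbb C \to \mathbb C$ with $\varphi(E(\omega')) = E(\omega)$. Since every quasiconformal map has finite maximal dilatation, this countable union exhausts $\mathcal M(\omega)$, so it is enough to prove that each $\mathcal M_K(\omega)$ is closed in the product topology of $\Omega$.

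The first ingredient is the observation that if $\omega^{(n)} \to \omega'$ in $\Omega$, then $E(\omega^{(n)}) \to E(\omega')$ in the Hausdorff metric on $[0,1]$. This is routine: the endpoints of the level-$k$ approximation $E_k(\omega)$ depend continuously on $q_1,\dots,q_k$, while $d_H(E_k(\omega), E(\omega)) \leq 2^{-k}$ uniformly in $\omega$ because each of the $2^k$ intervals at level $k$ has length at most $2^{-k}$. A $3\epsilon$-argument in $k$ then gives the Hausdorff convergence.

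Now take $\omega^{(n)} \in \mathcal M_K(\omega)$ with $\omega^{(n)} \to \omega'$, and let $\varphi_n$ be the corresponding $K$-quasiconformal maps, extended to $\hat{\mathbb C}$ by $\varphi_n(\infty)=\infty$. Since $0,1 \in E(\omega^{(n)})$, the images $\varphi_n(0), \varphi_n(1) \in E(\omega) \subset [0,1]$ are distinct. Post-composing with the affine map $M_n(w) = (w-\varphi_n(0))/(\varphi_n(1)-\varphi_n(0))$ produces $\tilde\varphi_n := M_n \circ \varphi_n$, which is $K$-quasiconformal and normalized at $0,1,\infty$. By the classical compactness theorem for normalized $K$-quasiconformal self-maps of $\hat{\mathbb C}$, a subsequence of $\{\tilde\varphi_n\}$ converges uniformly on compacta to a $K$-quasiconformal map $\tilde\varphi$.

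The main obstacle is to recover a limit for the unnormalized sequence $\varphi_n = M_n^{-1}\circ \tilde\varphi_n$, which reduces to showing that $|\varphi_n(1)-\varphi_n(0)|$ stays bounded away from $0$. The key point is that the normalized family $\{\tilde\varphi_n\}$ is equicontinuous on $\hat{\mathbb C}$ and hence uniformly bounded on $[0,1]$, so $M_n(E(\omega)) = \tilde\varphi_n(E(\omega^{(n)}))$ is contained in a fixed compact disk. Since $M_n(0), M_n(1) \in M_n(E(\omega))$ and $|M_n(1)-M_n(0)| = 1/|\varphi_n(1)-\varphi_n(0)|$, this yields the required lower bound. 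Passing to a further subsequence gives affine maps $M_n \to M$ with $\varphi_n \to M^{-1}\circ\tilde\varphi =: \varphi$ uniformly on compacta, and $\varphi$ is $K$-quasiconformal. Combining uniform convergence $\varphi_n \to \varphi$ on $[0,1]$ with the Hausdorff convergence $E(\omega^{(n)}) \to E(\omega')$ yields $\varphi(E(\omega')) = \lim_n \varphi_n(E(\omega^{(n)})) = E(\omega)$, so $\omega' \in \mathcal M_K(\omega)$. Thus $\mathcal M_K(\omega)$ is closed and $\mathcal M(\omega)$ is $F_\sigma$.
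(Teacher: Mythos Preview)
Your proof is correct and shares the paper's overall architecture: write $\mathcal M(\omega)=\bigcup_{K}\mathcal M_K(\omega)$ and show each level set is closed by extracting a limit of $K$-quasiconformal maps. The paper packages this as lower semi-continuity of the extremal-dilatation function $F_\omega(\omega')=\inf_\varphi K(\varphi)$, which is equivalent to your formulation.

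Where you genuinely diverge is in the identification of the limit map's image. The paper establishes a Carath\'eodory kernel result (Proposition~\ref{Pro:kernel}): if $\omega_m\to\omega_0$ then the kernel of the complements $X(\omega_m)$ equals $X(\omega_0)$, and then invokes the Lehto--Virtanen kernel theorem (Theorem~\ref{thm:kernel}) to conclude that the limit map sends $X(\omega)$ onto $X(\omega_0)$. You instead prove Hausdorff convergence $E(\omega^{(n)})\to E(\omega')$ directly from the finite-level approximations and combine it with uniform convergence of $\varphi_n$ on $[0,1]$ to get $\varphi(E(\omega'))=E(\omega)$. Your route is more elementary and self-contained, avoiding the kernel machinery; the paper's route plugs into classical quasiconformal theory and would generalize more readily to Cantor sets not sitting in a fixed compact interval. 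A further minor difference: you normalize explicitly by post-composing with affine maps and give a clean argument that the normalizing maps do not degenerate, whereas the paper rules out constant limits by a one-line appeal to $\varphi_m(E(\omega))=E(\omega_m)\subset[0,1]$, which is terser but leaves the normalization implicit.
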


The space $\Omega$ has the standard probability measure $m_{\infty}$ defined by the Lebesgue measure on $(0, 1)$.
Since each moduli space $\mathcal M(\omega)$ for $\omega\in \Omega$ is measurable, we may consider the volume $m_{\infty}(\mathcal M(\omega))$ of $\mathcal M(\omega)$.

\begin{Mythm}
\label{Thm:Volume}
	Except for at most one moduli space, the volume of any moduli space with respect to $m_{\infty}$ is zero.
\end{Mythm}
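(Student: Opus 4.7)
\textbf{Proof plan for Theorem~\ref{Thm:Volume}.}
The plan is to identify each moduli space $\mathcal{M}(\omega)$ as a tail event in $\Omega=(0,1)^{\mathbb{N}}$ and then invoke Kolmogorov's zero-one law for the product measure $m_{\infty}$. The main step is to show that if $\omega=(q_n)_{n=1}^{\infty}$ and $\omega'=(q_n')_{n=1}^{\infty}$ agree for all $n>N$, then $\omega'\in \mathcal{M}(\omega)$. After $N$ construction steps one has $E_N(\omega)=\bigcup_{j=1}^{2^N}I_N^j$ and $E_N(\omega')=\bigcup_{j=1}^{2^N}(I_N^j)'$; by the self-similarity inherent in the construction, $E(\omega)\cap I_N^j$ is the affine image of the ``tail'' Cantor set determined by $(q_{N+1},q_{N+2},\dots)$, rescaled to length $|I_N^j|$, and likewise for $\omega'$ with the \emph{same} tail. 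Hence the unique order-preserving affine bijection $(I_N^j)'\to I_N^j$ sends $E(\omega')\cap(I_N^j)'$ onto $E(\omega)\cap I_N^j$. Piecing these $2^N$ affine maps together (with the identity outside $[0,1]$) yields a piecewise-affine homeomorphism $\varphi_{\mathbb{R}}$ of $\mathbb{R}$, and its ``vertical'' extension $(x+iy)\mapsto \varphi_{\mathbb{R}}(x)+iy$ is a bi-Lipschitz, hence quasiconformal, self-map of $\mathbb{C}$ carrying $E(\omega')$ onto $E(\omega)$.

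It follows that membership in $\mathcal{M}(\omega)$ is unchanged under modifications of finitely many coordinates. Combined with the Borel measurability from Theorem~\ref{Thm:mesurable}, this places $\mathcal{M}(\omega)$ in the tail $\sigma$-algebra generated by the coordinate projections $\omega\mapsto q_n$. Under $m_{\infty}$ these projections are independent uniformly distributed random variables, so Kolmogorov's zero-one law gives $m_{\infty}(\mathcal{M}(\omega))\in\{0,1\}$ for every $\omega\in\Omega$. Since quasiconformal equivalence is an equivalence relation, the moduli spaces partition $\Omega$; in a probability space, at most one cell of such a partition can have measure $1$, so all other moduli spaces have measure $0$.

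The main technical point is transporting $E(\omega')$ to $E(\omega)$ by a quasiconformal self-map of $\mathbb{C}$ when the tails of $\omega$ and $\omega'$ agree. The self-similar scaling-invariance of the construction reduces this to a finite, piecewise-affine problem on $2^N$ intervals, and the vertical extension provides a clean bi-Lipschitz lift to the plane even though the stretch factors $|I_N^j|/|(I_N^j)'|$ may vary wildly with $j$. Everything else — the measurability input from Theorem~\ref{Thm:mesurable}, the zero-one law, and the partition argument — is entirely formal.
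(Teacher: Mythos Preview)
Your argument is correct and, in several places, cleaner than the paper's. Both proofs rest on the same underlying fact: altering finitely many coordinates of $\omega$ does not change $\mathcal{M}(\omega)$. You establish this directly via a piecewise-affine map on $\mathbb{R}$ extended vertically, whereas the paper phrases the same invariance through the shift map (its Lemma~\ref{lemma:shift and qc}) and proves it by transporting a given quasiconformal map through conformal identifications of $W(\sigma(\omega_i))$ with sub-pieces of $X(\omega_i)$, followed by a gluing proposition. Your construction is more elementary and self-contained; the paper's version is phrased so as to feed into the shift-map formalism. From the invariance, you invoke Kolmogorov's zero--one law to get $m_\infty(\mathcal{M}(\omega))\in\{0,1\}$; the paper instead manipulates the inclusions $\sigma^{-n}(\mathcal{M}(\omega))\subset\mathcal{M}(\sigma^{-n}(\omega))$ and $\sigma^{-n}(\sigma^n(\mathcal{M}(\omega)))=\mathcal{M}(\omega)$ and then appeals to ergodicity of the Bernoulli shift. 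These routes are essentially equivalent (Kolmogorov's law is implied by ergodicity of the shift), but yours avoids the intermediate lemmas about how $\sigma$ interacts with $\mathcal{M}(\cdot)$.

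Two small remarks. First, your piecewise-affine $\varphi_{\mathbb{R}}$ also needs affine pieces on the $2^N-1$ gaps $(J_N^j)'\to J_N^j$ to be a homeomorphism of $\mathbb{R}$; this is implicit in ``piecing together'' but worth a word. Second, the interval stretch factors $|I_N^j|/|(I_N^j)'|$ are actually independent of $j$ (all $I_N^j$ have equal length $2^{-N}\prod_{i\le N}(1-q_i)$); only the gap factors vary. Neither point affects the validity of the argument, since there are only finitely many pieces and hence the slopes are bounded regardless.
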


In \cite{ShigaTohoku}, we have conjectured that $m_{\infty}(\mathcal M(\omega))=0$ for any $\omega\in\Omega$.
While we have given a partial affirmative answer in Theorem \ref{Thm:Volume}, the conjecture is still open.

We consider conditions for $\omega'\in \Omega$ to belong to $\mathcal M(\omega)$.
\begin{Mythm}
\label{Thm:condition}
	Let $\omega=(q_n)_{n=1}^{\infty}$ and $\omega'=(q_n')_{n=1}^{\infty}$ be in $\Omega$.
	Suppose that both $\omega$ and $\omega'$  are increasing.
	Then $\omega'\in \mathcal M(\omega)$ only if
	\begin{equation}
	\label{eqn:Log}
		\sup_{n\in \mathbb N}\left |\log \frac{\log (1-q_n)}{\log (1-q_n')}\right |<\infty.
	\end{equation}
	\end{Mythm}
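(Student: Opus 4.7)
My plan rests on the quasiconformal invariance, up to the dilatation factor $K$, of the conformal modulus of ring domains. Throughout, fix a $K$-quasiconformal $\varphi:\mathbb C\to\mathbb C$ with $\varphi(E(\omega'))=E(\omega)$.

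For each $n\ge 2$ and each interior index $j$, I will use the ring domain $R_n^j\subset\mathbb C$ that separates $E(\omega)\cap I_n^j$ from $E(\omega)\setminus I_n^j$. Because $\omega$ is increasing, the smallest gap adjacent to $I_n^j$ is the step-$n$ gap, of length $q_n|I_{n-1}^{\lceil j/2\rceil}|=2q_n|I_n^j|/(1-q_n)$; every older adjacent gap (arising from some step $k<n$) is strictly longer, as a direct comparison of the lengths $q_k|I_{k-1}^\cdot|$ versus $q_n|I_{n-1}^\cdot|$ shows under the increasing hypothesis. Combining an explicit round-annulus lower bound with a Teichm\"uller/Gr\"otzsch upper bound then yields
\[
\mu(R_n^j)\asymp |\log(1-q_n)|
\]
as $q_n\to 1$, with constants independent of $n$ and $j$.

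Next, by qc invariance, $\mu(\varphi^{-1}(R_n^j))\ge \mu(R_n^j)/K$, and the pull-back ring separates the clopen subset $S_n^j:=\varphi^{-1}(E(\omega)\cap I_n^j)\subset E(\omega')$ from its complement. Since $\varphi^{-1}$ is uniformly continuous on the compact set $E(\omega)$, there is a smallest level $m=m(n,j)$ for which $S_n^j$ is a union of level-$m$ atoms $E(\omega')\cap I_m^k$ of $E(\omega')$. The increasing hypothesis on $\omega'$, together with the dual of the estimate above applied inside $E(\omega')$, shows that any ring separating such a union from its complement must traverse a step-$m$ gap of $E(\omega')$, and hence has modulus at most $C|\log(1-q_m')|$. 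Chaining,
\[
|\log(1-q_n)|/K \;\le\; C|\log(1-q_{m(n,j)}')|,
\]
and a symmetric argument with $\varphi$ replacing $\varphi^{-1}$ yields the reverse inequality.

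The decisive final step is the identification of $m(n,j)$ with $n$ up to a bounded shift. This is where the assumption that \emph{both} $\omega$ and $\omega'$ are increasing is essential: the sequences $\{|\log(1-q_n)|\}$ and $\{|\log(1-q_n')|\}$ are then monotone, and the two-sided comparison above forces the correspondence $n\leftrightarrow m(n)$ to lie within a bounded window, yielding the desired
\[
\sup_n\left|\log\frac{\log(1-q_n)}{\log(1-q_n')}\right|<\infty.
\]
The hardest ingredient is the sharp upper bound on $\mu(\varphi^{-1}(R_n^j))$ in terms of $|\log(1-q_{m(n,j)}')|$: one must show via an extremal-length argument that, because $\omega'$ is increasing, the binding constraint is the smallest step-$m$ gap the ring crosses rather than a combination of several gaps. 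The second subtlety, the matching of the two indices $n$ and $m(n,j)$, is precisely what collapses without the monotonicity of both $\omega$ and $\omega'$.
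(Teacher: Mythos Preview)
Your overall strategy—replace the paper's hyperbolic pants machinery by direct estimates on moduli of separating ring domains—is reasonable, and your two-sided estimate $\mu(R_n^j)\asymp|\log(1-q_n)|$ together with the upper bound $\mathrm{Mod}(\varphi^{-1}(R_n^j))\le C|\log(1-q'_{m(n,j)})|$ can indeed be made rigorous along the lines you sketch. The genuine gap is the ``decisive final step,'' the claim that the level correspondence $n\leftrightarrow m(n,j)$ stays within a bounded window.

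First, what you call the ``two-sided comparison'' is not two-sided at the same index: from $\varphi^{-1}$ you get $|\log(1-q_n)|\le CK\,|\log(1-q'_{m(n,j)})|$, while from $\varphi$ you get $|\log(1-q'_{n'})|\le CK\,|\log(1-q_{m'(n',j')})|$. These involve two unrelated maps $m$ and $m'$, and monotonicity of the two sequences does \emph{not} by itself force $m(n,j)$ close to $n$. Concretely, suppose (to test the logic) that $|\log(1-q_n)|=n$ and $|\log(1-q'_n)|=n^2$; then $m(n,j)\approx\sqrt{n}$ and $m'(n',j')\approx (n')^2$ would satisfy both chains of inequalities with room to spare, yet (\ref{eqn:Log}) fails. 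So some additional input beyond the scalar inequalities is required to pin down $m(n,j)$. Second, even the weaker claim ``for each $n$ there exists $j$ with $m(n,j)\le n$'' is false in general: a plane homeomorphism can send the $2^n$ level-$n$ atoms of $E(\omega)$ to $2^n$ clopen pieces of $E(\omega')$ \emph{each} of which splits some level-$n$ atom of $E(\omega')$, so that $m(n,j)>n$ for every $j$.

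What actually closes this gap in the paper is a topological counting argument that your outline omits. The paper works with the $2^{n_k}$ simple closed geodesics ${\gamma'}_{n_k}^{j}$ simultaneously: together they bound a planar domain $W'_{n_k}$ disjoint from $E(\omega')$, hence $\varphi(W'_{n_k})$ (and its geodesic straightening $\widehat W_{n_k}$) is a planar domain with $2^{n_k}$ boundary curves disjoint from $E(\omega)$. If every $[\varphi({\gamma'}_{n_k}^{j})]$ lies strictly inside some level-$n_k$ disk $D_{n_k,i}$, then any such disk containing a boundary curve must contain at least two (otherwise the portion of $E(\omega)\cap D_{n_k,i}$ outside that single curve would lie in $\widehat W_{n_k}$), so at least half of the disks $D_{n_k,i}$ contain no boundary curve at all; those disks then lie entirely in $\widehat W_{n_k}$, contradicting $\widehat W_{n_k}\cap E(\omega)=\emptyset$. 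This pigeonhole step has no counterpart in your argument, and without it the identification of $m(n,j)$ with $n$ cannot be completed. If you want to push your ring-modulus approach through, you will need to supply an analogous counting/topological argument at this point.
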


By using this result, we may show the following:
\begin{Mythm}
\label{Thm:uncountable}
	There are uncountably many moduli spaces of generalized Cantor sets in $\Omega$.
\end{Mythm}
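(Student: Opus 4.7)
The plan is to exploit Theorem~\ref{Thm:condition} by exhibiting an explicit uncountable family $\{\omega_\alpha\}_{\alpha \in (0, \infty)}$ of increasing sequences in $\Omega$ whose pairwise log--log ratios in (\ref{eqn:Log}) diverge; Theorem~\ref{Thm:condition} will then immediately rule out any two of them belonging to the same moduli space.

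Concretely, for each $\alpha > 0$ I would set $q_n^{\alpha} := 1 - e^{-n^{\alpha}}$ and $\omega_\alpha := (q_n^\alpha)_{n=1}^{\infty}$. Then $q_n^\alpha \in (0, 1)$, so $\omega_\alpha \in \Omega$; and since $n \mapsto n^\alpha$ is strictly increasing on $\mathbb N$, the sequence $\omega_\alpha$ is increasing, which is precisely what is needed for Theorem~\ref{Thm:condition} to apply to any pair $(\omega_\alpha, \omega_\beta)$. The key computation is $\log(1 - q_n^\alpha) = -n^\alpha$, so for any two distinct $\alpha, \beta \in (0, \infty)$,
\[
\left|\log \frac{\log(1-q_n^\alpha)}{\log(1-q_n^\beta)}\right| = |\alpha - \beta|\, \log n \longrightarrow \infty \quad (n \to \infty).
\]
The contrapositive of Theorem~\ref{Thm:condition} then yields $\omega_\beta \notin \mathcal M(\omega_\alpha)$; since moduli spaces form equivalence classes, this gives $\mathcal M(\omega_\alpha) \neq \mathcal M(\omega_\beta)$.

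Letting $\alpha$ range over $(0, \infty)$ produces a continuum of pairwise distinct moduli spaces, which is exactly the assertion of Theorem~\ref{Thm:uncountable}. There is essentially no obstacle here: the substantive work is carried entirely by Theorem~\ref{Thm:condition}, and the only subtlety is that the parametrization must be rapid enough for the log--log ratio to blow up. For comparison, the naive choice $q_n^\alpha = 1 - n^{-\alpha}$ yields a ratio $\alpha/\beta$, which is bounded and so supplies no information via Theorem~\ref{Thm:condition}; the faster-decaying tail $1 - e^{-n^\alpha}$ is what makes the argument go through.
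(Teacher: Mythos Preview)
Your proof is correct and essentially identical to the paper's own argument: the paper also takes $q_n(\alpha)=1-e^{-n^{\alpha}}$, computes the same log--log ratio $n^{\alpha-\alpha'}$, and invokes Theorem~\ref{Thm:condition}. The only cosmetic difference is that the paper restricts the parameter to $\alpha\in(1,\infty)$ whereas you use $(0,\infty)$; either range is uncountable and the monotonicity and divergence arguments go through in both cases.
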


The result may be interesting if it is compared with the fact  by R. Hidalgo mentioned above, namely, the finiteness of the number of Moduli spaces of Cantor sets generated by finitely generated Kleinian groups.

\medskip

{\bf Acknowledgment.} The author thanks the anonymous referee for helpful comments and suggestions which improve this paper.

\section{Preliminaries}
\subsection{Infinite sequences in $(0, 1)$ and generalized Cantor sets }
As we have mentioned in \S 1, the set of generalized Cantor sets is identified with $\Omega:=(0, 1)^{\mathbb N}$.

\begin{Def}
	Let $\omega=(q_n)_{n=1}^{\infty}\in \Omega:=(0, 1)^{\mathbb N}$.
We say that $\omega$ has \emph{a lower bound} and \emph{an upper bound} if $q_n\geq \delta$ and $q_n\leq\delta$ $(n\in \mathbb N)$  for some $\delta\in (0, 1)$ independent of $n$, respectively.
\end{Def}

We have already obtained a sufficient condition for two elements with a lower bound to be quasiconformally equivalent to each other.
\begin{thm}[cf. \cite{Shigadynamical} Theorem II,  Corollary 1.2 and Theorem III]
\label{thm:old}
	Let $\omega=(q_n)_{n=1}^{\infty}, \omega'=(q_n')_{n=1}^{\infty}$ be in $\Omega$ with a lower bound.
	Then $E(\omega')$ is quasiconformally equivalent to $E(\omega)$ if $\sup_{n\in \mathbb N}\left |\log \frac{1-q_n}{1-q_n'}\right |<\infty$.
	In particular, if $\omega$ has lower and upper bounds, then the Cantor set $E(\omega)$ is quasiconformally equivalent to the standard middle one third Cantor set $\mathcal C_0$.
	
	Moreover, if $\omega$ does not have an upper bound, then $E(\omega)$ is not quasiconformally equivalent to $\mathcal C_0$.
\end{thm}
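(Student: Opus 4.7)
The theorem has two essentially independent halves — sufficiency of the bound on $|\log\tfrac{1-q_n}{1-q_n'}|$ and necessity of an upper bound on $\omega$ for equivalence with $\mathcal{C}_0$ — plus the intermediate corollary, which will be an immediate consequence of the sufficient direction. The plan is to tackle the two halves by different techniques: an explicit inductive construction for sufficiency, and a modulus/uniform-perfectness argument for necessity.

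For the sufficient direction, I would construct $\varphi:\mathbb{C}\to\mathbb{C}$ as a limit of level-by-level approximants $\varphi_n$. Enclose each interval $I_n^j\subset E_n(\omega)$ in a thin rectangle $R_n^j$ of fixed aspect ratio (chosen small compared to the adjacent gaps), and simultaneously build the analogous rectangles for $\omega'$. Set $\varphi_n$ to be affine on each $R_n^j$, carrying it onto the corresponding rectangle for $\omega'$; outside $\bigcup_j R_n^j$ retain the previous $\varphi_{n-1}$. Passing from stage $n$ to $n+1$ amounts to redefining $\varphi_n$ inside each $R_n^j$ so as to respect the subdivision $I_n^j=I_{n+1}^{2j-1}\cup J_{n+1}^{2j-1}\cup I_{n+1}^{2j}$: on the sub-rectangles $R_{n+1}^{2j-1}, R_{n+1}^{2j}$ use the new affine maps, and interpolate quasiconformally across the gap quadrilateral between them. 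The dilatation of the interpolation depends only on the horizontal rescaling ratio $\tfrac{1-q_{n+1}'}{1-q_{n+1}}$ and the gap-to-interval ratio, both of which are uniformly bounded by $\sup_n|\log\tfrac{1-q_n}{1-q_n'}|<\infty$ together with the lower-bound assumption. Consequently the Beltrami coefficients $\mu_{\varphi_n}$ share a common $L^\infty$ bound, the sequence $\varphi_n$ converges on $\mathbb{C}\setminus E(\omega)$, and the limit $\varphi$ is quasiconformal with $\varphi(E(\omega))=E(\omega')$. The corollary follows by applying this to $\omega'=(1/3,1/3,\ldots)$: when $\omega$ has both bounds, $1-q_n\in[\delta,1-\delta]$, hence comparable to $2/3$.

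For the necessary direction, I argue by contradiction: suppose $q_{n_k}\to 1$ along a subsequence and that some $K$-quasiconformal $\varphi$ sends $E(\omega)$ onto $\mathcal{C}_0$. Fix any $j_k$, let $c_k$ be the midpoint of $I_{n_k}^{2j_k-1}$, and consider the round annulus
\begin{equation*}
A_k=\bigl\{z\in\mathbb{C}:\tfrac{1}{2}|I_{n_k}^{2j_k-1}|<|z-c_k|<\tfrac{1}{2}|I_{n_k}^{2j_k-1}|+|J_{n_k}^{2j_k-1}|\bigr\}.
\end{equation*}
A direct computation shows $A_k\cap E(\omega)=\emptyset$, that $A_k$ separates $E(\omega)\cap I_{n_k}^{2j_k-1}$ from $E(\omega)\setminus I_{n_k}^{2j_k-1}$, and $\mathrm{mod}(A_k)=\tfrac{1}{2\pi}\log\bigl(1+\tfrac{4q_{n_k}}{1-q_{n_k}}\bigr)\to\infty$. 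Then $\varphi(A_k)$ is a ring domain in $\mathbb{C}\setminus\mathcal{C}_0$ separating $\mathcal{C}_0$ into two nonempty compacta, with $\mathrm{mod}(\varphi(A_k))\geq \mathrm{mod}(A_k)/K\to\infty$. This contradicts the uniform perfectness of $\mathcal{C}_0$, which provides a universal upper bound on the modulus of any ring separating $\mathcal{C}_0$ into two nonempty pieces.

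The main obstacle lies in the sufficient direction: the approximants $\varphi_n$ must be organised so that the $L^\infty$-bound on $\mu_{\varphi_n}$ does not deteriorate as $n\to\infty$, even though the rectangles $R_n^j$ proliferate geometrically and shrink. The lower-bound hypothesis on both $\omega$ and $\omega'$ is essential exactly here: without it the gap widths $q_n$ could collapse, so the interpolation across thinning gap quadrilaterals would acquire arbitrarily large dilatation. The necessity argument is comparatively short once modulus of a separating ring is identified as the right conformal invariant and uniform perfectness of $\mathcal{C}_0$ is invoked.
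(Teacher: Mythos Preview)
This theorem is not proved in the present paper: it is quoted from \cite{Shigadynamical} (Theorem~II, Corollary~1.2 and Theorem~III there) as background in the preliminaries, so there is no in-paper argument to compare against.

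That said, your outline is essentially correct. The level-by-level piecewise-affine construction for the sufficient half is the standard route; the point you isolate is exactly the right one: after normalising $I_{n-1}^j$ and ${I'}_{n-1}^j$ to $[0,1]$, the induced interval map has slopes $\tfrac{1-q_n'}{1-q_n}$ on the two sub-intervals and $\tfrac{q_n'}{q_n}$ on the gap, and these are uniformly bounded by the hypothesis together with the lower bound $q_n,q_n'\ge\delta$. The only real work is the bookkeeping that the interpolation annuli between successive levels do not accumulate dilatation, which you flag.

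Your necessary half via uniform perfectness of $\mathcal C_0$ is correct and pleasantly shorter than the hyperbolic-length machinery the present paper uses to prove its Theorem~\ref{Thm:condition}. One small repair: the claim $A_k\cap E(\omega)=\emptyset$ does \emph{not} hold for an arbitrary $j_k$. The gap $J_{n_k}^{2j_k-2}$ immediately to the left of $I_{n_k}^{2j_k-1}$ was created at some earlier level $n_k-\ell$, and when the lower bound $\delta$ is small (say $\delta<\tfrac13$) one can have $|J_{n_k}^{2j_k-2}|<|J_{n_k}^{2j_k-1}|$, so the left half of your annulus may meet $E(\omega)$. Simply take $j_k=1$: then the left half of $A_k$ lies in $(-\infty,0)$ and the computation goes through unchanged.
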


\begin{Def}
	We say that $\omega=(q_n)_{n=1}^{\infty}\in\Omega$ is \emph{increasing} if $q_{n}\leq q_{n+1}<1$ for any $n\in \mathbb N$.
\end{Def}

For $\omega=(q_n)_{n=1}^{\infty}\in \Omega$, we take $E_k(\omega)=\cup_{j=1}^{2^k}I_k^j$ which is the closed set at the $k$-step of the construction given in the introduction.
Here, $I_k^j$ is a closed interval located at the right of $I_k^{j-1}$ $(j=1, 2, \dots , 2^k-1)$.
The set $[0, 1]\setminus E_k(\omega)$ consists of $2^k-1$ open intervals $J_k^1, \dots , J_k^{2^k-1}$.
The interval $J_k^j$ is located between $I_k^j$ and $I_k^{j+1}$.

Now, we calculate the lengths of $I_k^j$ and $J_k^j$ for the argument in \S 4.

Because of the construction of $E_k(\omega)$, we have
\begin{equation*}
	|I_k^j|=\frac{1}{2}(1-q_k)|I_{k-1}^j|=2^{-k}\prod_{i=1}^k (1-q_i)\quad (j\in \{1, \dots , 2^k\}).
\end{equation*}

Viewing the construction of $E_k(\omega)$ from $E_{k-1}(\omega)$, we see that the open interval $I_{k-1}^j$ is divided into $I_k^{2j-1}, J_k^{2j-1}$ and $I_k^{2j}$.
Hence, if $j$ is odd, then
\begin{equation}
\label{eqn:odd}
	|J_k^j|=q_k|I_{k-1}^j|=\frac{2q_k}{1-q_k}|I_k^j|.
\end{equation}

When $j$ is even, we put $j=2^{\ell}m$ for some integer $\ell$ with $1\leq \ell\leq k-1$ and some odd number $m$.
Since $J_k^j$ is located between $I_k^j$ and $I_k^{j+1}$, we see that $J_k^j=J_{k-1}^{j/2}=J_{k-1}^{2^{\ell -1}}$.
Repeating this argument, we have $J_{k}^j=J_{k-\ell}^m$. 
Since $m$ is odd, we have
\begin{eqnarray}
	|J_k^j|&=&|J_{k-\ell}^m|=\frac{2q_{k-\ell}}{1-q_{k-\ell}}|I_{k-\ell}^m| 
	=\frac{2q_{k-\ell}}{1-q_{k-\ell}}\cdot2^{-k+\ell}\prod_{i=1}^{k-\ell}(1-q_i) \nonumber \\
	\label{eqn:even}
	&=& \frac{2^{\ell+1}q_{k-\ell}}{\prod_{i=k-\ell}^k(1-q_i)}|I_k^j|.
\end{eqnarray}

\subsection{Topology of $\Omega$ and sequences of the complements of Cantor sets.}
As we have mentioned in \S 2, the set of generalized Cantor sets is identified with $\Omega:=(0, 1)^{\mathbb N}$.
We consider the space $\Omega$ with the weak topology (cf.~\cite{Bog}).
That is, a sequence $\omega_m=(q_n^m)_{n=1}^{\infty}$ $(m=1, 2, \dots )$ converges to $\omega=(q_n)_{n=1}^{\infty}$ if and only if $\lim_{m\to \infty}q_n^m=q_n$ for each $n\in \mathbb N$.

For a sequence $\{\omega_m\}$ converging to $\omega$, we consider the complements $X(\omega_m)$ of $E(\omega_m)$ $(m\in \mathbb N)$ and the  \emph{Carath\'eodory kernel} $N(\{X(\omega_m)\}_{m=1}^{\infty})$ of the sequence of the domains.
Here, $z\in \widehat{\mathbb C}$ belongs to the kernel $N(\{X(\omega_m\}_{m=1}^{\infty})$ if and only if there exist a neighborhood $U$ of $z$ and $M\in \mathbb N$ such that $U\subset X(\omega_m)$ for all $m>M$ (cf. \cite{Lehto-Virtanen}).

\begin{Pro}
\label{Pro:kernel}
	Suppose that $\{\omega_m\}_{m=1}^{\infty}\subset \Omega$ converges to $\omega$ in $\Omega$ as $m\to \infty$.
	Then, $N(\{X(\omega_m)\}_{m=1}^{\infty})=X(\omega)$.
\end{Pro}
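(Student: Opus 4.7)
The plan is to prove the two inclusions $X(\omega) \subset N(\{X(\omega_m)\})$ and $N(\{X(\omega_m)\}) \subset X(\omega)$ separately, using in both cases the key observation that for each fixed $k$ the set $E_k(\omega)$ depends continuously (in Hausdorff distance) on the finitely many coordinates $q_1,\dots,q_k$, since the endpoints of each $I_k^j$ are explicit polynomial expressions in those coordinates.

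\medskip

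\textbf{Step 1 (kernel contains $X(\omega)$).} Fix $z\in X(\omega)$. Since $E(\omega)$ is compact and $X(\omega)$ is open, I can choose a neighborhood $U$ of $z$ (for $z=\infty$, take the complement of a large closed disk) so that $\overline U\cap E(\omega)=\emptyset$. As $E(\omega)=\bigcap_k E_k(\omega)$ is a decreasing intersection of compact sets, compactness gives an index $k_0$ with $\overline U\cap E_{k_0}(\omega)=\emptyset$. Because each endpoint of each $I_{k_0}^j(\omega_m)$ depends continuously on $q_1^m,\dots,q_{k_0}^m$, the pointwise convergence $q_i^m\to q_i$ for $i\le k_0$ forces $E_{k_0}(\omega_m)\to E_{k_0}(\omega)$ in Hausdorff distance. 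Consequently $\overline U\cap E_{k_0}(\omega_m)=\emptyset$ for all sufficiently large $m$, and since $E(\omega_m)\subset E_{k_0}(\omega_m)$, we conclude $U\subset X(\omega_m)$ eventually, i.e.\ $z\in N(\{X(\omega_m)\})$.

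\medskip

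\textbf{Step 2 (kernel is contained in $X(\omega)$).} I argue the contrapositive: if $z\in E(\omega)$, then $z\notin N(\{X(\omega_m)\})$. For every $k$, pick $j_k$ so that $z\in I_k^{j_k}(\omega)$; from $|I_k^{j_k}(\omega)|\le 2^{-k}$ the diameter of this containing interval tends to $0$. Given an arbitrary neighborhood $U$ of $z$, choose $k$ so large that $I_k^{j_k}(\omega)\subset U$. By the same continuity as in Step 1, $I_k^{j_k}(\omega_m)\to I_k^{j_k}(\omega)$, hence $I_k^{j_k}(\omega_m)\subset U$ for $m$ large. But the Cantor construction guarantees $E(\omega_m)\cap I_k^{j_k}(\omega_m)\neq\emptyset$, so $U\cap E(\omega_m)\neq\emptyset$ for arbitrarily large $m$, ruling out $z\in N(\{X(\omega_m)\})$.

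\medskip

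\textbf{Anticipated difficulty.} The argument is mostly mechanical once one isolates the two elementary facts used above, namely (i) the Hausdorff continuity of $\omega\mapsto E_k(\omega)$ in the first $k$ coordinates and (ii) the uniform bound $|I_k^j|\le 2^{-k}$. The only place requiring mild care is the treatment of $z=\infty$ (which clearly lies in $X(\omega)$ and in every $X(\omega_m)$, since each $E(\omega_m)\subset[0,1]$), and the Hausdorff-continuity statement itself, which should be invoked cleanly rather than rewritten from scratch from the formulas for $|I_k^j|$ and $|J_k^j|$ already recorded in \S 2.1.
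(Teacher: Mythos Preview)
Your proof is correct and follows essentially the same approach as the paper's own argument: both directions rest on the continuous dependence of the finite-stage set $E_k(\cdot)$ on the first $k$ coordinates, together with the nesting $E(\omega_m)\subset E_k(\omega_m)$ and the shrinking $|I_k^j|\le 2^{-k}$. Your phrasing via Hausdorff convergence and the compactness of the decreasing intersection $\bigcap_k E_k(\omega)$ is slightly more streamlined than the paper's interval-by-interval tracking, but the underlying mechanism is identical.
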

\begin{proof}
	If $z\not\in \mathbb R$ or $z=\infty$, then obviously, $z\in N(\{X(\omega_m)\}_{m=1}^{\infty}$.
	Also, it is obvious that $z\in N(\{X(\omega_m)\}_{m=1}^{\infty})$ if $z\in (-\infty, 0)\cup (1, +\infty)$.
	
	Let $z$ be in $[0, 1]\setminus E(\omega)$.
	Then, there exists $k\in \mathbb N$ such that $z\in [0, 1]\setminus E_k(\omega)$.
	Since $E_k(\omega)=\cup_{j=1}^{2^k}I_k^j$, the point $z$ belongs to $I_k^J$ for some $J\in \{1, 2, \dots , 2^k\}$.
		Take a sufficiently small neighborhood $U$ of $z$ so that $U\cap \mathbb R\subset I_k^J$.	
	Let $I_{k, m}^J$ be the interval of $E_k(\omega_m)$ corresponding to $I_k^J$.
	The intervals $I_k^J$ and $I_{k, m}^J$ depend only on $(q_n)_{n=1}^k$ and $(q_n^m)_{n=1}^k$, and $q_n^m\to q_n$ for $n=1, \dots k$.
	Hence, $I_{k, m}^J$ is close to $I_k^J$ if $m$ is sufficiently large.
	 Therefore, we see that $U\cap \mathbb R\subset I_{k, m}^J\subset X(\omega_m)$ for a sufficiently large $m$ and we are convinced that $X(\omega)\subset N(\{X(\omega_m)\}_{m=1}^{\infty})$.
	 
	 For $x\in E(\omega)$, take a neighborhood $U$ of $z$.
	 From the construction of $E(\omega)$, there exist $k\in \mathbb N$, $J\in \{1, 2, \dots , 2^k\}$ and an interval $I_k^J$ of $E_k(\omega)$ such that $U\supset I_k^J$.
	 Because of the same reason as above, we may find an interval $I_{k, m}^J$ for a large $M\in \mathbb N$ such that $U\supset I_{k, M}^J$.
	 This implies that $U\not\subset X(\omega_M)$.
	 
	 For any $m>M$, there exists an interval $I_{k, m}^j$ such that $I_{k, m}^j\subset I_{k, M}^J$.
	 Thus, we see that $U\supset I_{k, m}^j$ and
	  $U\not\subset X(\omega_m)$ for $m>M$.
	  So, we conclude that $X(\omega)=N(\{X(\omega_m)\}_{m=1}^{\infty})$ and the proof is completed.
	 \end{proof}
	 
	 For a sequence $\varphi_n : G\to G_n$ of $K$-quasiconformal mappings $(n=1, 2, \dots)$, the Carath\'eodory kernel $N(\{G_n\}_{n=1}^{\infty})$ becomes the image of the limit mapping if the limit mapping is also a quasiconformal mapping.
	 \begin{thm}[\cite{Lehto-Virtanen} Chapter II, Theorem 5.4 ]
	 \label{thm:kernel}
Let $G\subset\widehat{\mathbb C}$ be a domain with at least two boundary points.
	Suppose that $K$-quasiconformal mappings $\varphi_n : G\to G_n$ converges to a quasiconformal mapping $\varphi$ locally uniformly on $G$ as $n\to \infty$.
	Then $\varphi$ maps $G$ onto a component of $N(\{G_n\}_{n=1}^{\infty})$.
\end{thm}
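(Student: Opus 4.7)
The plan is to prove the theorem in two parts: first, that $\varphi(G)\subset N(\{G_n\}_{n=1}^{\infty})$, and then that $\varphi(G)$ is in fact a full connected component of this kernel.

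For the first inclusion, I would exploit local uniform convergence together with the fact that $\varphi$ is a homeomorphism. Fix $w=\varphi(z_0)\in\varphi(G)$ and a closed disk $\overline{D}(z_0,r)\subset G$, and set $\psi_n:=\varphi_n\circ\varphi^{-1}:\varphi(G)\to G_n$. These converge to the identity locally uniformly on $\varphi(G)$. Choose $\rho>0$ with $\overline{D}(w,\rho)\subset\varphi(D(z_0,r))$. Uniform convergence on $\partial D(w,\rho)$ forces, for $n$ large, the Jordan curve $\psi_n(\partial D(w,\rho))$ to encircle $D(w,\rho/2)$; since $\psi_n$ is a homeomorphism, $D(w,\rho/2)\subset\psi_n(D(w,\rho))\subset G_n$, and this disk witnesses $w\in N(\{G_n\})$.

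For the second part, $\varphi(G)$ is open and connected and therefore lies in a unique component $N_0$ of $N(\{G_n\})$. Suppose for contradiction that $\varphi(G)\subsetneq N_0$. A path in $N_0$ joining a point of $\varphi(G)$ to a point of $N_0\setminus\varphi(G)$ must leave $\varphi(G)$ at some first point $\zeta^*\in\partial\varphi(G)\cap N_0$. Take an open neighborhood $V\ni\zeta^*$ with $V\subset G_n$ for all $n\geq M$. The family $\{\varphi_n^{-1}|_V\}$ consists of $K$-quasiconformal maps into $G$ and is normal; pass to a subsequence converging locally uniformly on $V$ to a limit $\psi$, which is either $K$-quasiconformal or a constant. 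Since $\varphi_n^{-1}\to\varphi^{-1}$ locally uniformly on $\varphi(G)$, one has $\psi=\varphi^{-1}$ on the nonempty open set $V\cap\varphi(G)$, which rules out the constant case. Thus $\psi$ is a $K$-quasiconformal extension of $\varphi^{-1}|_{V\cap\varphi(G)}$ to $V$, with $\psi(V)\subset\overline{G}$, and the pointwise identity $w=\varphi_{n_k}(\varphi_{n_k}^{-1}(w))\to\varphi(\psi(w))$, valid whenever $\psi(w)\in G$, gives $\varphi\circ\psi=\mathrm{id}$ on $\psi^{-1}(G)\cap V$. If $\psi(\zeta^*)\in G$, this forces $\zeta^*=\varphi(\psi(\zeta^*))\in\varphi(G)$, contradicting $\zeta^*\in\partial\varphi(G)$.

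The hard part will be ruling out the residual possibility that $\psi(\zeta^*)\in\partial G$. Since $\psi(V)$ is an open subset of $\widehat{\mathbb C}$ contained in $\overline{G}$, it must lie in the interior of $\overline{G}$, so $\psi(\zeta^*)$ would have to be an irregular boundary point of $G$ (such as an isolated puncture). Closing this case requires a removability principle for $K$-quasiconformal mappings: one shows that such interior boundary points are removable singularities, extends $\varphi_n$ (and hence $\varphi$) across them to replace $G$ by a larger domain in which $\psi(\zeta^*)$ is interior, and then reduces to the already-handled situation. Verifying this removability, in the generality permitted by the hypothesis that $G$ has at least two boundary points, is the main technical burden of the argument.
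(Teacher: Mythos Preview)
The paper does not prove this theorem; it is quoted verbatim from Lehto--Virtanen (Chapter~II, Theorem~5.4) and used as a black box in the proof of Theorem~\ref{Thm:mesurable}. There is therefore no ``paper's own proof'' to compare your attempt against.

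On the merits of your sketch: the first inclusion $\varphi(G)\subset N(\{G_n\})$ is handled correctly, and your overall strategy for the second part---pass to a normal-family limit of the inverse maps $\varphi_n^{-1}$ and show it extends $\varphi^{-1}$---is indeed the standard one. The gap you yourself flag, namely the possibility $\psi(\zeta^*)\in\partial G$, is genuine, but your proposed resolution via removable singularities is both heavier than necessary and not correct as stated: boundary points of $G$ lying in the interior of $\overline G$ need not be isolated punctures (think of a slit), so a pointwise removability principle does not close the case. The cleaner route is to run the normal-family argument \emph{globally} on the whole component $N_0$ rather than on a small neighbourhood $V$: every compact subset of $N_0$ eventually lies in $G_n$, so $\{\varphi_n^{-1}\}$ is a normal family of $K$-quasiconformal maps on all of $N_0$, with a subsequential limit $\psi$ that is injective and agrees with $\varphi^{-1}$ on $\varphi(G)$. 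One then argues that the open set $\psi^{-1}(G)\subset N_0$ is also closed in $N_0$ (using that on $\psi^{-1}(G)$ one has $\varphi\circ\psi=\mathrm{id}$, so $\psi^{-1}(G)=\varphi(G)\cap N_0$ is already all of $\varphi(G)$, and a boundary point would force $\psi$ to fail injectivity against nearby values in $G$); connectedness of $N_0$ then gives $\psi(N_0)\subset G$ and hence $N_0=\varphi(\psi(N_0))\subset\varphi(G)$. This avoids any appeal to removability. If you want the details, they are in Lehto--Virtanen at the cited location.
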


\subsection{The canonical pants decomposition (cf.~\cite{Shigadynamical} \S 3)}
We put $X(\omega)=\widehat{\mathbb C}\setminus E(\omega)$ for $\omega=(q_n)_{n=1}^{\infty}\in \Omega:=(0, 1)^{\mathbb N}$.
The surface $X(\omega)$ is a hyperbolic Riemann surface of infinite type.
We define a pants decomposition of $X(\omega)$ associated to $\omega$.

First, we put $\gamma_0:=\{ix\mid x\in \mathbb R\}\cup\{\infty\}$.
It is a simple closed geodesic in $X(\omega)$.

Next, we consider $E_2(\omega)=\cup_{j=1}^4 I_2^j$ and take circles $C_2^j$  passing through two open intervals adjacent to $I_2^j$ $(j=1, \dots , 4)$.
Then, the geodesics $\gamma_2^j$ in $X(\omega)$ homotopic to $C_2^j$ together with $\gamma_0$ make two pairs of pants, $P_1^1$ and $P_1^2$ so that $\partial P_1^1=\gamma_0\cup\gamma_2^1\cup\gamma_2^2$ and $\partial P_1^2=\gamma_0\cup\gamma_2^3\cup\gamma_2^4$.

Inductively, we construct a pair of pants $P_k^j$ for $k\in \mathbb N$ and $j\in \{1, 2, \dots 2^k\}$.
Each $P_k^j$ is bounded by geodesics $\gamma_k^j, \gamma_{k+1}^{2j-1}$ and $\gamma_{k+1}^{2j}$, and $\{P_k^j\}_{k\in \mathbb N, j=1, 2, \dots , 2^k}$ gives a pants decomposition of $X(\omega)$.
We call it \emph{the canonical pants decomposition} of $X(\omega)$.

\subsection{Extremal length, the modulus of a ring domain and holomorphic quadratic differentials.}

Let $W$ be a domain in $\widehat {\mathbb C}$ (or a Riemann surface) and $\Gamma$ a family of curves in $W$.
We consider a measurable conformal metric $\rho=\rho (z)|dz|$ on $W$ with
\begin{equation*}
	A(\rho):=\iint_W \rho^2(z)dxdy\in (0, +\infty).
\end{equation*}
Then, the extremal length $\lambda (\Gamma)$ of $\Gamma$ is defined by
\begin{equation*}
	\lambda (\Gamma)=\sup_{\rho}\frac{L_{\Gamma}(\rho)^2}{A(\rho)},
\end{equation*}
where $L_{\Gamma}(\rho)=\inf_{\gamma\in \Gamma} \int_{\gamma}\rho (z)|dz|$ and the supremum is taken over all above conformal metrics $\rho$.
Obviously, the extremal length is a conformal invariant.

A domain $W$ in $\widehat{\mathbb C}$ is called \emph{a ring domain} if $\partial W$ consists of two connected components.
The modulus of a ring domain $W$ denoted by $\textrm{Mod}(W)$ is defined by
\begin{equation*}
	\textrm{Mod}(W)=\lambda (\Gamma_W)^{-1},
\end{equation*}
where $\Gamma_W$ is the set of closed curves in $W$ separating the two connected components of $\partial W$.
It is known that both $\lambda(\Gamma_W)$ and $\lambda(\Gamma_Q)$ above have extremal metrics which attain the supremums of the definition of the extremal length (cf. \cite{Ahlfors}).

More precisely, for a circular annulus $A(1, R)=\{z\in \mathbb C \mid 1<|z|<R\}$ $(1<R)$ and a quadrilateral $(A(1, R)\cap\mathbb H ; -R, -1, 1, R)$, metrics $\rho_0 :=\frac{1}{|z|}|dz|\arrowvert_{A(1, R)}$ and $\rho_0\arrowvert_{A(1, R)\cap\mathbb H}$ is the extremal metric, respectively.
This fact is generalized as follows (cf. \cite{K.Strebel}) .

Let $R$ be a hyperbolic Riemann surface of finite type and $\varphi=\varphi (z)dz^2$ be a holomorphic quadratic differential on $R$ with $\|\varphi\|:=\iint |\varphi|<\infty$.
Then, a branch of $w=\int_{p_0}^{p}\sqrt{\varphi}$ gives a local coordinate around $p_0\in R$ unless $p_0$ is a zero of $\varphi$.
A \emph{horizontal trajectory} of $\varphi$ is the locus of $\{\textrm{Re } w=c\}$ for some $c\in \mathbb R$.
We say that a horizontal trajectory of $\varphi$ is \emph{closed} if it is a simple closed curve on $R$.
Then, the following is known:
\begin{Pro}
\label{Pro:Jenkins-Strebel}
	Let $\gamma$ be a simple closed hyperbolic geodesic on $R$.
	Then, there exists a holomorphic quadratic differential $\varphi_{\gamma}$ on $R$ with $\|\varphi_{\gamma}\|<\infty$ such that
	\begin{enumerate}
		\item any closed horizontal trajectory of $\varphi_{\gamma}$ is homotopic to $\gamma$,
		\item the set of closed horizontal trajectories of $\varphi_{\gamma}$ forms an annulus $A_{\gamma}$ and
	\begin{equation*}
		\textrm{Mod}(A_{\gamma})=\lambda (\Gamma_{\gamma})^{-1},
	\end{equation*}
	where $\Gamma_{\gamma}$ is the set of simple closed curves homotopic to $\gamma$ on $R$.
	\end{enumerate}
	\end{Pro}

Finally, we exhibit a theorem by Maskit on the comparison of the hyperbolic length and the extremal length.
\begin{thm}[Maskit \cite{Maskit}]
\label{eqn:Maskit}
	Let $\ell_{R}(\gamma)$ be the hyperbolic length of a closed geodesic $\gamma$ in a hyperbolic Riemann surface $R$ and $\lambda (\Gamma)$ the extremal length of the curve family $\Gamma$ of closed curve homotopic to $\gamma$ in $R$.
	Then,
	\begin{equation}
		\lambda (\Gamma)\leq \frac{1}{2}\ell_{R}(\gamma)e^{\ell_{R}(\gamma)/2}
	\end{equation}
\end{thm}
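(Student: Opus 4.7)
The plan is to bound $\lambda(\Gamma)$ from above by embedding an annular neighborhood of $\gamma$ into $R$ and invoking the general fact that $\lambda(\Gamma)\le 1/\mathrm{Mod}(A)$ whenever $A\subset R$ is a ring domain whose core curve is homotopic to $\gamma$. The natural candidate is a standard hyperbolic collar of $\gamma$.

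Set $\ell=\ell_R(\gamma)$. First I would introduce Fermi coordinates $(s,t)$ about $\gamma$, where $s$ parametrizes $\gamma$ by arclength modulo $\ell$ and $t$ is the signed normal distance; in these coordinates the hyperbolic metric of $R$ reads $\cosh^2(t)\,ds^2+dt^2$. Changing the normal coordinate via the Gudermannian $u=\int_0^t\mathrm{sech}(\tau)\,d\tau$ makes the metric conformal to the flat cylinder $\{(s,u):|u|<\mathrm{gd}(w)\}/(s\sim s+\ell)$, and hence the strip $\{|t|<w\}$ has conformal modulus $2\,\mathrm{gd}(w)/\ell$.

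Next I would apply the Keen--Buser collar lemma, which guarantees that the strip is embedded in $R$ provided $w\le w_0:=\mathrm{arcsinh}(1/\sinh(\ell/2))$. This produces an embedded ring domain $A\subset R$ whose core is homotopic to $\gamma$ and whose modulus is $2\,\mathrm{gd}(w_0)/\ell$, so that
\begin{equation*}
\lambda(\Gamma)\le\frac{\ell}{2\,\mathrm{gd}(w_0)}.
\end{equation*}
The final step is an elementary estimate: using $\sinh(\ell/2)\le\tfrac12 e^{\ell/2}$ one sees that $w_0$ behaves like $2e^{-\ell/2}$ when $\ell$ is large and grows like $-\log\ell$ when $\ell$ is small, and since $\mathrm{gd}$ is increasing with $\mathrm{gd}(x)\sim x$ near $0$ and $\mathrm{gd}(x)\to\pi/2$ at infinity, one checks in both regimes that $\ell/(2\,\mathrm{gd}(w_0))\le\tfrac12\ell e^{\ell/2}$.

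The step I expect to be most delicate is pinning down the sharp constant $1/2$. The collar lemma is somewhat wasteful of area, and the crude inequality above may land a constant larger than $1/2$ in intermediate ranges of $\ell$. To recover the optimal constant I anticipate needing to feed a more carefully chosen test metric into the definition of $\lambda(\Gamma)$ directly, for instance the hyperbolic density itself truncated to the collar, and to estimate $L_\Gamma$ and $A$ by a direct length-area argument on the quotient cylinder rather than routing through the modulus of an embedded annulus.
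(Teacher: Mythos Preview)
The paper does not prove this statement; it merely quotes it as a result of Maskit and cites \cite{Maskit}. There is therefore no proof in the paper to compare your proposal against.

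For what it is worth, your outline via the collar lemma is a standard route to inequalities of this type, and the modulus computation you sketch is correct: the embedded collar of half-width $w_0=\operatorname{arcsinh}\bigl(1/\sinh(\ell/2)\bigr)$ has modulus $\tfrac{2}{\ell}\arctan\bigl(1/\sinh(\ell/2)\bigr)$, so $\lambda(\Gamma)\le \ell\big/\bigl(2\arctan(1/\sinh(\ell/2))\bigr)$. The remaining elementary inequality $\arctan\bigl(1/\sinh(\ell/2)\bigr)\ge e^{-\ell/2}$ does in fact hold for all $\ell>0$ (both sides tend to $\pi/2$ and $1$ as $\ell\to 0$, and the ratio of left to right tends to $2$ as $\ell\to\infty$; a monotonicity check fills in the middle), so the collar argument already delivers the constant $\tfrac12$ without the extra test-metric step you anticipate. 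Maskit's original argument is somewhat different in flavor---he works directly with a fundamental domain in the upper half-plane and a length--area estimate there---but either approach yields the stated bound.
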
 
holds.

\section{Proof of Theorem \ref{Thm:mesurable}}
For $\omega\in \Omega$, we consider a function $F_{\omega} : \Omega\to \mathbb R_{\geq 1}\cup\{+\infty\}$ by
\begin{equation*}
	F_{\omega}(\omega')=\begin{cases}
		\inf_{\varphi} K(\varphi), &\omega'\in \mathcal M(\omega) \\
		+\infty, &\omega'\not\in \mathcal M(\omega),
	\end{cases}
\end{equation*}
where the infimum is taken over all quasiconformal mappings $\varphi :\mathbb C\to \mathbb C$ with $\varphi (E(\omega))=E(\omega')$ and $K(\varphi)$ is the maximal dilatation of $\varphi$.

\begin{lemma}
	$F_{\omega}$ is a lower semi-continuous function on $\Omega$.
\end{lemma}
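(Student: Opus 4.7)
The plan is to establish lower semi-continuity in sequential form: given $\omega_m' \to \omega_0'$ in $\Omega$ with $K_0 := \liminf_{m \to \infty} F_\omega(\omega_m') < \infty$, I construct a quasiconformal map $\varphi_0: \mathbb{C} \to \mathbb{C}$ with $\varphi_0(E(\omega)) = E(\omega_0')$ and $K(\varphi_0) \leq K_0$. Passing to a subsequence, I may assume $F_\omega(\omega_m') \to K_0$, and for each $m$ I pick a quasiconformal $\varphi_m: \mathbb{C} \to \mathbb{C}$ with $\varphi_m(E(\omega)) = E(\omega_m')$ and $K(\varphi_m) < F_\omega(\omega_m') + 1/m$, so that $K(\varphi_m) \to K_0$.

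To extract a convergent subsequence from $\{\varphi_m\}$, I decompose $\varphi_m = A_m \circ \psi_m$, where $\psi_m$ is the unique quasiconformal map with the same Beltrami coefficient as $\varphi_m$ but normalized by $\psi_m(0) = 0$, $\psi_m(1) = 1$, $\psi_m(\infty) = \infty$, and $A_m(z) = \alpha_m + \beta_m z$ is the complex affine map determined by $\alpha_m := \varphi_m(0)$ and $\alpha_m + \beta_m := \varphi_m(1)$. The family $\{\psi_m\}$ consists of quasiconformal self-maps of $\widehat{\mathbb{C}}$ fixing three points with uniformly bounded dilatation, hence is a normal family; after a subsequence, $\psi_m \to \psi_0$ locally uniformly on $\mathbb{C}$, with $\psi_0$ a $K_0$-quasiconformal map fixing $0, 1, \infty$.

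The main obstacle is to prevent $A_m$ from degenerating in the limit. Both $\alpha_m = \varphi_m(0)$ and $\alpha_m + \beta_m = \varphi_m(1)$ lie in $E(\omega_m') \subset [0, 1]$, so $\{\alpha_m\}$ and $\{\beta_m\}$ are bounded above; the subtle point is proving $|\beta_m| \geq c > 0$. I would exploit the structural fact that $E(\omega_m')$ contains both endpoints $0$ and $1$, so $\mathrm{diam}\, E(\omega_m') = 1$, and hence $\psi_m(E(\omega)) = A_m^{-1}(E(\omega_m'))$ has diameter $1/|\beta_m|$. Since the normalized family $\{\psi_m\}$ is uniformly bounded on the compact set $E(\omega) \subset [0, 1]$, these diameters are uniformly bounded, forcing $|\beta_m|$ away from $0$. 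Passing to a further subsequence, $A_m \to A$ for some nondegenerate affine map $A$.

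It follows that $\varphi_m = A_m \circ \psi_m \to \varphi_0 := A \circ \psi_0$ locally uniformly on $\mathbb{C}$, and $\varphi_0$ is a $K_0$-quasiconformal self-map of $\mathbb{C}$. To identify $\varphi_0(E(\omega))$, I apply Theorem~\ref{thm:kernel} to the quasiconformal maps $\varphi_m: X(\omega) \to X(\omega_m')$ together with Proposition~\ref{Pro:kernel}, which identifies the Carath\'eodory kernel of $\{X(\omega_m')\}$ as $X(\omega_0')$; this yields $\varphi_0(X(\omega)) = X(\omega_0')$. Since $\varphi_0$ is a homeomorphism of $\widehat{\mathbb{C}}$ fixing $\infty$, it follows that $\varphi_0(E(\omega)) = E(\omega_0')$, whence $F_\omega(\omega_0') \leq K(\varphi_0) \leq K_0$, establishing lower semi-continuity.
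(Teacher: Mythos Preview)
Your proof is correct and follows essentially the same approach as the paper's: extract a convergent subsequence of near-extremal quasiconformal maps $\varphi_m : \mathbb{C} \to \mathbb{C}$ with $\varphi_m(E(\omega)) = E(\omega_m')$, show the limit $\varphi_0$ is a genuine quasiconformal map (not a constant), and then invoke Proposition~\ref{Pro:kernel} together with Theorem~\ref{thm:kernel} to identify $\varphi_0(E(\omega)) = E(\omega_0')$. The only difference is cosmetic: the paper appeals directly to the Lehto--Virtanen trichotomy for limits of $K$-quasiconformal maps and rules out the two constant cases in a single sentence (using that $E(\omega_m') \subset [0,1]$ always contains both $0$ and $1$), whereas you make this step explicit via the decomposition $\varphi_m = A_m \circ \psi_m$ into a normalized map and an affine part, controlling each piece separately. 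Your argument has the minor advantage of not needing the existence of \emph{extremal} maps realizing the infimum $F_\omega(\omega_m')$, which the paper tacitly assumes.
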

\begin{proof}
Take $\omega_0\in \Omega$ and we show that $F_{\omega}$ is lower semi-continuous at $\omega_0$.

At first, we assume that $\omega_0\in \mathcal M(\omega)$.
		For $\omega_0=(q_n^0)_{n=1}^{\infty}\in \mathcal M(\omega)$, we consider $\omega_m=(q_n^m)_{n=1}^{\infty}$ converging to $\omega_0$ as $m\to \infty$.
		
		Since $F_{\omega}(\omega_0)<+\infty$, we may assume that $K:=\liminf_{m\to\infty}F_{\omega}(\omega_m)<+\infty$.
		Therefore, for any $\varepsilon>0$, there exists $N\in \mathbb N$ such that
		$K(\varphi_m)\leq K+\varepsilon$ for any $m\in \mathbb N$ with $m>N$, where $\varphi_m : \mathbb C\to \mathbb C$ $(m=1, 2, \dots )$ are extremal quasiconformal mappings with $\varphi (E(\omega_0))=E(\omega_m)$.  Hence,  $K(\varphi_m)=F_{\omega}(\omega_m)$.
		Therefore, $\{\varphi_m\}_{m=1}^{\infty}$ is a normal family.
		
		Let $\varphi$ be a limit map of a subsequence of the family.
		There are the following three possibilities of $\varphi$ (\cite{Lehto-Virtanen} Chapter II). 
		\begin{enumerate}
			\item $\varphi\equiv \infty$;
			\item $\varphi\equiv z_0$ for some $z_0\in \mathbb C$;
			\item $\varphi$ is a quasiconformal self map of $\mathbb C$.
		\end{enumerate}
		
		Since the convergence to $\varphi$ is locally uniform on $\mathbb C$ and $\varphi_m (E(\omega))=E(\omega_m)$, we see that only the case (3) is possible.
		Thus, $K(\varphi)\leq K+\varepsilon$ and we obtain
		\begin{equation*}
			K(\varphi)\leq K=\liminf_{m\to\infty}F_{\omega}(\omega_m).
		\end{equation*}

		On the other hand, since $\omega_m\to \omega_0$, we have seen in Proposition \ref{Pro:kernel} that 
		the Carath\'eodory kernel $N(\{E(\omega_m)^c\}_{m=1}^{\infty})$ of the sequence domains $\{E(\omega_m)^c\}_{m=1}^{\infty}$ is $E(\omega_0)^c$.
		It follows from Theorem \ref{thm:kernel} that $\lim_{m\to \infty}\varphi_m (E(\omega)^c)=\varphi (E(\omega)^c)=E(\omega_0)^c$ and $\varphi (E(\omega))=E(\omega_0)$.
		Therefore, we obtain
		\begin{equation*}
			F_{\omega}(\omega_0)\leq K(\varphi)\leq  K=\liminf_{m\to\infty}F_{\omega}(\omega_m).
		\end{equation*}
		
		If $\omega_0\not\in \mathcal M(\omega)$, then $F_{\omega}(\omega_0)=+\infty$.
		Take a sequence $\{\omega_m\}_{m=1}^{\infty}$ in $(0, 1)^{\mathbb N}$ which converges to $\omega_0$.
		
		If $\omega_m\not\in \mathcal M(\omega)$ for any $m\in \mathbb N$, then $F_{\omega}(\omega_m)=+\infty$ and we have
		\begin{equation*}
			F_{\omega}(\omega_0)= +\infty=\lim_{m\to\infty}F_{\omega}(\omega_m).
		\end{equation*}
		
		If the sequence contains infinitely many elements in $\mathcal M(\omega)$, say $ \omega_{m_k}$ $(k=1, 2, \dots)$, then we obtain
		\begin{equation*}
			\lim_{k\to \infty}F_{\omega}(\omega_{m_k})=+\infty=F_{\omega}(\omega_0).
		\end{equation*}
		Indeed, if $\liminf_{k\to\infty}F_{\omega}(\omega_{m_k})<+\infty$, then we conclude that $F_{\omega}(\omega_0)<+\infty$ by using the normal family argument as in the first part of the proof above and it is a contradiction.
		
		Thus, we verify that $F_{\omega}$ is lower semi-continuous at $\omega_0$ as desired.
	\end{proof}
	Now, the proof of Theorem \ref{Thm:mesurable} is obvious.
	Indeed, from the definition of $F_{\omega}$, we have
	\begin{equation*}
		\mathcal M(\omega)=F_{\omega}^{-1}([1, +\infty))=\cup_{n=1}^{\infty}F_{\omega}^{-1}([1, n]).
	\end{equation*} 
	Since $F_{\omega}$ is lower semi-continuous, $F_{\omega}^{-1}([1, n])$ $(n=1, 2, \dots )$ are closed sets in $\Omega$.
	Hence, $\mathcal M(\omega)$ is an $F_{\sigma}$ set.
	This completes the proof of Theorem \ref{Thm:mesurable}.
	
	\section{Proof of Theorem \ref{Thm:Volume}}
	\subsection{The shift map and quasiconformal equivalence}
	Let $\sigma :\Omega\to \Omega$ be the shift map.
	Namely, for $\omega=(q_n)_{n=1}^{\infty}\in \Omega$,
	\begin{equation*}
		\sigma (\omega)=(q_{n+1})_{n=1}^{\infty}=(q_2, q_3, \dots ).
	\end{equation*}

	The shift map $\sigma$ is a classical object in the ergodic theory.
	Actually, the followings are well-known (cf. \cite{Ergodic}).
	\begin{Pro}
		\begin{enumerate}
			\item The shift map $\sigma$ is measure preserving, that is, $m_{\infty}(\sigma^{-1}(A))=m_{\infty}(A)$ for any measurable set $A\subset \Omega$.
			\item The action of $\sigma : \Omega\to \Omega$ is mixing with respect to $m_{\infty}$.
Namely, for any measurable sets $A, B$ in $\Omega$, 
\begin{equation}
\label{eqn:mixing}
	\lim_{n\to\infty}m_{\infty}(A\cap \sigma^{-n}(B))=m_{\infty}(A)m_{\infty}(B)
\end{equation}
holds.
As mixing is a stronger property than ergodicity, the action of $\sigma$ on $\Omega$ is ergodic.
		\end{enumerate}
	\end{Pro}
	
	Now, we establish a relationship between the shift map and the quasiconformal equivalence.
	
	\begin{lemma}
	\label{lemma:shift and qc}
	Let $\omega_i =(q_n^{(i)})_{n=1}^{\infty}$ be in $\Omega$ $(i=1, 2)$.
		If $\sigma (\omega_1)$ and $\sigma (\omega_2)$ are quasiconformally equivalent to each other, then so are $\omega_1$ and $\omega_2$. 
		In particular, if $\omega, \omega'\in \sigma^{-1}(\alpha)$ for some $\alpha\in \Omega$, then $\mathcal M(\omega)=\mathcal M(\omega')$.
	\end{lemma}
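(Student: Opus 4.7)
The plan is to construct a quasiconformal self-map $\Phi:\mathbb{C}\to\mathbb{C}$ with $\Phi(E(\omega_1))=E(\omega_2)$ from the given $\psi$ by exploiting the one-step self-similarity of the generalized Cantor set. Writing $q_i:=q_1^{(i)}$ and using the notation of Section~2.1, let $\phi_i^L(z)=\tfrac{1-q_i}{2}z$ and $\phi_i^R(z)=\tfrac{1-q_i}{2}z+\tfrac{1+q_i}{2}$ be the affine embeddings of $[0,1]$ onto $I_1^1(\omega_i)=:L_i$ and $I_1^2(\omega_i)=:R_i$. The inductive construction of the Cantor sets then yields
\begin{equation*}
E(\omega_i)=\phi_i^L(E(\sigma\omega_i))\cup\phi_i^R(E(\sigma\omega_i))\qquad(i=1,2).
\end{equation*}

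First I form the two conjugated maps $\psi_L:=\phi_2^L\circ\psi\circ(\phi_1^L)^{-1}$ and $\psi_R:=\phi_2^R\circ\psi\circ(\phi_1^R)^{-1}$, each a quasiconformal self-map of $\mathbb{C}$ with the same dilatation as $\psi$, and which by the self-similarity satisfy $\psi_L(E(\omega_1)\cap L_1)=E(\omega_2)\cap L_2$ and $\psi_R(E(\omega_1)\cap R_1)=E(\omega_2)\cap R_2$. Next I pick a topological disk $B\supset[0,1]$ and set $U_L:=\phi_1^L(B)\supset L_1$, $U_R:=\phi_1^R(B)\supset R_1$; since $U_R$ is the translate of $U_L$ by $\tfrac{1+q_1}{2}$, they are disjoint whenever $\operatorname{diam}(B)<\tfrac{1+q_1}{1-q_1}$, which any thin neighborhood of $[0,1]$ satisfies. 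Declaring $\Phi$ to equal $\psi_L$ on $\overline{U_L}$ and $\psi_R$ on $\overline{U_R}$, the problem reduces to extending $\Phi$ across the annular region $\widehat{\mathbb{C}}\setminus(\overline{U_L}\cup\overline{U_R})$ so as to match the prescribed quasisymmetric boundary values on $\partial U_L\cup\partial U_R$, which is a standard quasiconformal extension problem across an annulus whenever the target region is again an annulus.

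The main obstacle is securing the disjointness of the images $V_L:=\psi_L(\overline{U_L})=\phi_2^L(\psi(\overline{B}))$ and $V_R:=\phi_2^R(\psi(\overline{B}))$. Since $V_R=V_L+\tfrac{1+q_2}{2}$ is again a translate, disjointness holds precisely when $\operatorname{diam}\psi(B)<\tfrac{1+q_2}{1-q_2}$. To secure this, one first replaces $\psi$, without altering its restriction to $E(\sigma\omega_1)$, by a quasiconformal map whose restriction to $[0,1]$ has controlled diameter. The symmetries of $E(\sigma\omega_i)$ under $z\mapsto 1-z$ and $z\mapsto\bar z$ let us assume, after composing $\psi$ with these involutions if needed, that $\psi|_{E(\sigma\omega_1)}$ is orientation-preserving as a map into $\mathbb{R}$; then extending $\psi|_{E(\sigma\omega_1)}$ by linear interpolation on the gaps of $E(\sigma\omega_1)$ yields a quasisymmetric homeomorphism $\mathbb{R}\to\mathbb{R}$, whose Ahlfors--Beurling extension to $\mathbb{C}$ is a new quasiconformal map agreeing with $\psi$ on $E(\sigma\omega_1)$ and sending $[0,1]$ to $[0,1]$. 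Uniform continuity of this replacement on compacta then makes $\operatorname{diam}\psi(B)$ as close to $1$ as desired for $B$ a thin enough neighborhood of $[0,1]$, and $1<\tfrac{1+q_2}{1-q_2}$ closes the argument.

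Finally, the \emph{in particular} clause is immediate: when $\sigma(\omega)=\sigma(\omega')=\alpha$, the identity map of $\mathbb{C}$ trivially realizes a quasiconformal equivalence of $E(\sigma\omega)=E(\alpha)=E(\sigma\omega')$, so the first part yields $\omega'\in\mathcal{M}(\omega)$; by transitivity of quasiconformal equivalence, $\mathcal{M}(\omega)=\mathcal{M}(\omega')$.
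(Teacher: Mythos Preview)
Your overall strategy---transport the given quasiconformal map $\psi$ to the two halves of $E(\omega_1)$ via the affine self-similarities and then fill in the complementary annulus by a quasiconformal extension---is exactly the paper's approach (the paper's conformal maps $F_i,F_i'$ are nothing but your affine maps $\phi_i^L,\phi_i^R$, and its gluing Proposition plays the role of your ``standard extension across an annulus'').

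The gap is in your normalization of $\psi$. You assert that after composing with $z\mapsto 1-z$ and $z\mapsto\bar z$ the restriction $\psi|_{E(\sigma\omega_1)}$ becomes order-preserving on $\mathbb R$. This is false: a quasiconformal self-map of $\mathbb C$ can permute a real Cantor set in ways far more complicated than a global reflection can undo. For the standard middle-thirds set, take a quasiconformal map equal to $z\mapsto \tfrac13-z$ near $[0,\tfrac13]$ and to the identity near $[\tfrac23,1]$; its restriction sends $0\mapsto\tfrac13$, $\tfrac13\mapsto 0$, $\tfrac23\mapsto\tfrac23$, $1\mapsto 1$, which is neither increasing nor decreasing, and neither of your two involutions repairs this. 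Hence the linear interpolation across the gaps need not be a homeomorphism of $\mathbb R$, and your control on $\operatorname{diam}\psi(B)$ collapses. (Even if monotonicity held, the further claim that gap-wise linear interpolation of a quasisymmetric map on a Cantor set is again quasisymmetric on $\mathbb R$ is not automatic and would itself require justification.)

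The repair is to drop the requirement $B\supset[0,1]$: you only need $B\supset E(\sigma\omega_1)$. Fix in advance a Jordan neighborhood $V$ of $E(\sigma\omega_2)$ thin enough that $\phi_2^L(V)$ and $\phi_2^R(V)$ are disjoint. Since $\psi$ is a homeomorphism with $\psi(E(\sigma\omega_1))=E(\sigma\omega_2)\subset V$, the open set $\psi^{-1}(V)$ is a neighborhood of the compact, totally disconnected set $E(\sigma\omega_1)$; inside its intersection with the analogous thin neighborhood on the $\omega_1$ side one can enclose $E(\sigma\omega_1)$ by a single Jordan curve, producing a Jordan domain $B$ with $\psi(B)\subset V$. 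This is precisely what the paper does when it asserts the existence of Jordan domains $U_1,U_2$ with $U_2=\varphi(U_1)$ sitting inside the prescribed disks $W(\sigma(\omega_i))$; no normalization of $\psi$ is needed at all.
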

	\begin{proof}
		Suppose that $\sigma (\omega_2)$ is quasiconformally equivalent to $\sigma (\omega_1)$.
		Then, there exists a quasiconformal mapping $\varphi : \mathbb C\to \mathbb C$ with $\varphi (E(\sigma (\omega_1)))=E(\sigma (\omega_2))$.
		
		We take $W(\sigma (\omega_i)):=X(\sigma (\omega_i))\cap \Delta (2^{-1}, \{(1-q_1^{(i)})q_2^{(i)}\}^{-1}-2^{-1})$ ($i=1, 2$), where $X(\omega )=\widehat{\mathbb C}\setminus E(\omega)$  for $\omega\in \Omega$ and $\Delta (a, r)$ is an open disk of radius $r>0$ centered at $a\in \mathbb C$.
		From the construction, $W(\sigma (\omega_i))$ is conformally equivalent to $W_i:=X(\omega_i)\cap \Delta (4^{-1}(1-q_1^{(i)})q_2^{(i)}, 2^{-1}(1-2^{-1}(1-q_1^{(i)})q_2^{(i)}))$ and $W_i' :=X(\omega_i)\cap \Delta (1-4^{-1}(1-q_1^{(i)})q_2^{(i)}, 2^{-1}(1-2^{-1}(1-q_1^{(i)})q_2^{(i)}))$.
		Let $F_i$ and $F_i'$ be a conformal mapping from $W(\sigma (\omega_i))$ to $W_i$ and $W_i'$, respectively $(i=1, 2)$.
		
		Since $\varphi$ is a homeomorphism with $\varphi (E(\sigma (\omega_1)))=E(\sigma (\omega_2))$, there exist Jordan domains $U_1, U_2$ such that $E(\sigma (\omega_1))\subset U_1\subset W(\sigma (\omega_1))$, $E(\sigma (\omega_2))\subset U_2\subset W(\sigma (\omega_2))$ and $U_2=\varphi (U_1)$.
		Then, $\psi_1:=F_2\circ \varphi \circ {F_1|_{F_1(U_1)}}^{-1} : F_1(U_1)\to F_2(U_2)$ and $\psi_1':=F_2'\circ \varphi \circ {F_1'|_{F_1'(U_1)}}^{-1} : F_1'(U_1)\to F_2'(U_2)$ give a quasiconformal mapping $\Phi$ from $F_1(U_1)\cup F_1'(U_1)$ to $F_2(U_2)\cup F_2'(U_2)$ with $\Phi (E(\omega_1))=E(\omega_2)$.
		We have also a quasiconformal mapping $\Psi : \widehat {\mathbb C}\setminus \overline{F_1(U_1)\cup F_1'(U_1)}\to \widehat {\mathbb C}\setminus\overline{F_2(U_2)\cup F_2'(U_2)}$  with $\Psi (\partial F_1(U_1))=\partial F_2(U_2)$ and $\Psi (\partial F_1'(U_1))=\partial F_2'(U_2)$ since both regions are annuli. 
		
		Here, we use the following fact on the gluing of quasiconformal mappings ( \cite{ShigaSchottky} Lemma 4.1) .
		
		\begin{Pro}
 	\label{Pro:Gluing}
	Let $X, Y$ be Riemann surfaces. 
	We consider simple closed curves $\alpha\subset X$ and $\beta\subset Y$ with $X\setminus{\alpha}=X_{1}\sqcup X_2$ and $Y\setminus\beta =Y_1\sqcup Y_2$, respectively.
	Suppose that there exist quasiconformal mappings $f_{i} : X_{i}\to Y_{i}$ $(i=1, 2)$ such that $f_1(\alpha)=f_2(\alpha)=\beta$.
	Then, there exist an annular neighborhood $A$ of $\alpha$ and a quasiconformal mapping $f$ on $A$ into $Y$ such that
	\begin{enumerate}
		\item $A':=f(A)$ is an annular neighborhood of $\beta$;
		\item we put
	\begin{equation}
	\label{extendqc}
		F(p)=\begin{cases}
			f_i (p), &p\in X_i\setminus A \quad (i=1, 2) \\
			f(p), &p\in U.
		\end{cases}
	\end{equation}
		Then, $F$ is a quasiconformal mapping from $X$ to $Y$.
	\end{enumerate}
 \end{Pro}

By applying this Proposition for $f_1=\Phi, f_2=\Psi$, we verify that $\omega_1$ and $\omega_2$ are quasiconformally equivalent to each other.
		\end{proof}
		
\noindent
{\bf Notation.} We define $\mathcal M(\sigma^{-1}(\omega))$ by $\mathcal M(\omega')$ for some $\omega'\in \sigma^{-1}(\omega)$.
From the above lemma, $\mathcal M(\sigma^{-1}(\omega))$ depends on $\omega$ but it is independent of $\omega'\in \sigma^{-1}(\omega)$.

\begin{Cor}
	For $\omega=(q_k)_{k=1}^{\infty}\in \Omega$ and for any $n\in \mathbb N$,
	\begin{equation}
	\label{eqn:sigma-inclusion}
		\sigma^n (\mathcal M(\omega))\supset \mathcal M(\sigma^n (\omega)),\quad \mathcal M(\sigma^{-n}(\omega))\supset \sigma^{-n}(\mathcal M(\omega )).
	\end{equation}
	and
	\begin{equation}
	\label{eqn:measure=}
		\sigma^{-n}(\sigma^n(\mathcal M(\omega)))=\mathcal M(\omega).
	\end{equation}
\end{Cor}
\begin{proof}
	Let $\widetilde \omega$ be in $\mathcal M(\sigma (\omega))$.
	For $\omega'\in \sigma^{-1}(\widetilde \omega)$, $\widetilde \omega=\sigma (\omega')$ is quasiconformally equivalent to $\sigma (\omega)$.
	Hence, $\omega'$ is quasiconformally equivalent to $\omega$ by Lemma \ref{lemma:shift and qc}.
	This implies that $\widetilde \omega=\sigma (\omega ')$ belongs to $\sigma (\mathcal M(\omega))$ and we have $\sigma (\mathcal M(\omega))\supset \mathcal M(\sigma (\omega))$.
	Repeating this argument, we conclude that $\sigma^n (\mathcal M(\omega))\supset \mathcal M(\sigma^n (\omega))$.
	
	The same argument gives another inclusion of (\ref{eqn:sigma-inclusion}).
	Indeed, if $\widehat\omega=(\widehat{q}_k)_{k=1}^{\infty}$ belongs to $\sigma^{-n}(\mathcal M(\omega))$, then there exist $\omega'=(q_k')_{k=1}^{\infty}\in \mathcal M(\omega)$ and $(x_1, \dots , x_n)\in (0, 1)^n$ such that
	\begin{equation*}
		\widehat \omega=(x_1, \dots , x_n, q_1', q_2', \dots ).
	\end{equation*}
	Hence, $\sigma^n (\widehat \omega)=\omega'$ is quasiconformally equivalent to $\omega$ and we conclude that $\widehat\omega\in \mathcal M(\sigma^{-n}(\omega))$ from Lemma \ref{lemma:shift and qc}.
	
As for (\ref{eqn:measure=}), it suffices to show that $\sigma^{-n}(\sigma^n(\mathcal M(\omega)))\subset \mathcal M(\omega)$ because $\sigma^{-n}(\sigma^n(\mathcal M(\omega)))\supset \mathcal M(\omega)$ is obvious. 
	
	Let $\widehat\omega$ be in $\sigma^{-n}(\sigma^n(\mathcal M(\omega)))$.
	Then, there exist $\omega'=(q_k')_{k=1}^{\infty}\in \mathcal M(\omega)$ and $(x_1, \dots , x_n)\in (0, 1)^{n}$ such that
	\begin{equation*}
		\widehat \omega=(x_1, \dots , x_n, q_{n+1}', q_{n+2}', \dots ).
	\end{equation*}
	Hence, we have $\sigma^n(\widehat\omega)=\sigma^n(\omega')$.
	Thus, we conclude that $\widehat\omega$ is quasiconformally equivalent to $\omega'\in \mathcal M(\omega)$.
	So, $\widehat\omega$ belongs to $\mathcal M(\omega)$ and we have $\sigma^{-n}(\sigma^n (\mathcal M(\omega)))\subset\mathcal M(\omega)$.
	
\end{proof}		
\subsection{Proof of the theorem}
Since $\sigma$ is measure preserving, 
\begin{equation*}
	m_{\infty}(\sigma^{-n}(\mathcal M(\omega)))=m_{\infty}(\mathcal M(\omega)),
\end{equation*}
for any $n\in \mathbb N$.

Suppose that $m_{\infty}(\mathcal M(\omega))>0$. Then, from the mixing property (\ref{eqn:mixing}) we see that there exists $N\in \mathbb N$ such that
\begin{equation*}
	m_{\infty}(\sigma^{-N}(\mathcal M(\omega))\cap \mathcal M(\omega))>0
\end{equation*}

Since $\mathcal M(\sigma^{-N}(\omega))\supset\sigma^{-N}(\mathcal M(\omega))$ from (\ref{eqn:sigma-inclusion}), we have
\begin{equation*}
	m_{\infty}(\mathcal M(\sigma^{-N}(\omega))\cap\mathcal M(\omega))>0.
\end{equation*}
Thus, we conclude $\mathcal M(\sigma^{-N}(\omega))=\mathcal M(\omega)$ because moduli spaces are equivalence classes.
This means $\mathcal M(\omega)=\mathcal M(\sigma^{-N}(\omega))\supset \sigma^{-N}(\mathcal M(\omega))$.
Hence, we see that $\mathcal M(\omega)=\sigma^{-N}(\mathcal M(\omega))$ as measurable sets.
Since $\sigma^N$ is also mixing, it follows from (\ref{eqn:mixing}) that $m_{\infty}(\mathcal M(\omega))=1$ if $m_{\infty}(\mathcal M(\omega))>0$.
Thus, we conclude that at most one moduli space has the positive volume, if any.
	
	\section{Proof of Theorem \ref{Thm:condition}}
	
	We may assume that $\sup_{n\in \mathbb N} q_n =1$ and $\sup_{n\in \mathbb N} q_n '=1$.
	Indeed, if $\sup_{n\in \mathbb N} q_n <1$ and $E(\omega')$ is quasiconformally equivalent to $E(\omega)$, then it follows from Theorem \ref{thm:old} that $\sup_{n\in \mathbb N} q_n' <1$. 
	Indeed, since $\omega$ has both upper and lower bounds, $E(\omega)$ is quasiconformally equivalent to the standard middle one third Cantor set $\mathcal C_0$.
	However, if $\omega'$ does not have an upper bound, then $E(\omega')$ is not quasiconformally equivalent to $\mathcal C_0$ nor $E(\omega)$.
	This is a contradiction.
	Therefore, the condition (2) is satisfied in this case.
	The same argument also works when $\sup_{n\in \mathbb N} q_n' =1$
	So, we assume that $\sup_{n\in \mathbb N} q_n =1$ and $\sup_{n\in \mathbb N} q_n' =1$.
	
	Suppose that $E(\omega')$ is quasiconformally equivalent to $E(\omega)$ but the condition (\ref{eqn:Log}) is not satisfied, namely
	\begin{equation}
	\label{eqn:Log+}
		\sup_{n\in \mathbb N}  \left |\log \frac{\log (1-q_n)}{\log (1-q_n')}\right |=\infty.
	\end{equation}
	
	\noindent
	{\bf Step 1 : Analyzing the canonical pants decomposition (Part I).}
	
	\noindent	
	Let $\{P_k^j\}_{j=1, 2, \dots , 2^k ; k\in \mathbb N}$ (resp. $\{{P'}_k^{j}\}_{j=1, 2, \dots , 2^k ; k\in \mathbb N}$) be the canonical pants decomposition of $X(\omega)$ (resp. $X(\omega')$).
	Each $P_k^j$ (resp. ${P'}_k^i$) is bounded by geodesics $\gamma_k^j, \gamma_{k+1}^{2j-1} \mbox{ and }\gamma_{k+1}^{2j}$ (resp. $\gamma_k^j, \gamma_{k+1}^{2j-1} \mbox{ and }\gamma_{k+1}^{2j}$).
	Then, Kinjo (\cite{Kinjo}) shows the following:
	\begin{lemma}[Kinjo]
		\begin{enumerate}
			\item If $j=1$ or $2^k$, then
			\begin{equation*}
				\ell _{X(\omega)}(\gamma_k^j)\leq \frac{2\pi^2}{\log \frac{1+q_k}{1-q_k}};
			\end{equation*}
			\item If $j=2^{\ell}m$ or $2^{\ell}m+1$ for some $\ell\in \{1, \dots , k-1\}$ and an odd number $m$, then
			\begin{equation*}
				\ell _{X(\omega)}(\gamma_k^j)\leq \max \left \{\frac{2\pi^2}{\log \frac{1+q_k}{1-q_k}}, \frac{2\pi^2}{\log \frac{1-q_k+2q_{k-\ell}}{1-q_k}}\right \},
			\end{equation*}
		\end{enumerate}
		where $\ell _{X(\omega)}(c)$ is the hyperbolic length of a curve $c$ in $X(\omega)$.
	\end{lemma}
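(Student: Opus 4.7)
The plan is to reduce bounding the hyperbolic length of $\gamma_k^j$ to estimating the modulus of a ring domain. The key input is the inequality
\[
  \ell_{X(\omega)}(\gamma) \leq \frac{\pi}{\textrm{Mod}(A)}
\]
valid for any ring domain $A \subset X(\omega)$ homotopic to a simple closed geodesic $\gamma$. This follows from the Schwarz--Pick contraction applied to the holomorphic inclusion $A \hookrightarrow X(\omega)$ (which dominates the hyperbolic metric of $X(\omega)$ restricted to $A$ by that of $A$) combined with the classical identity that the core geodesic of the round annulus $\{1 < |z| < R\}$ has $A$-hyperbolic length $2\pi^2/\log R = \pi/\textrm{Mod}$. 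It thus suffices to construct, for each pair $(k, j)$, an explicit circular Euclidean annulus in $X(\omega)$, homotopic to $\gamma_k^j$, whose modulus is large enough.

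For each interval $I_k^j = [a, a+L]$ with $L = |I_k^j|$, let $L_-, L_+ \in (0, +\infty]$ denote the lengths of the adjacent open gaps of $[0,1] \setminus E_k(\omega)$, with the convention $L_- = +\infty$ if $j = 1$ and $L_+ = +\infty$ if $j = 2^k$. Setting $\delta := \min(L_-, L_+)$, I will take the centered annulus
\[
  A_{k, j} := \Bigl\{ \tfrac{L}{2} < \bigl|z - (a + \tfrac{L}{2})\bigr| < \tfrac{L}{2} + \delta \Bigr\}.
\]
Its trace on the real axis is $(a - \delta, a) \cup (a + L, a + L + \delta)$, which by construction sits inside the adjacent gaps (or the unbounded rays $(-\infty, 0)$ or $(1, +\infty)$ when $j \in \{1, 2^k\}$), so $A_{k, j} \subset X(\omega)$ and is homotopic to $\gamma_k^j$. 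Its modulus is $\frac{1}{2\pi}\log(1 + 2\delta/L)$, giving $\ell_{X(\omega)}(\gamma_k^j) \leq 2\pi^2/\log(1 + 2\delta/L)$.

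In the boundary case $j \in \{1, 2^k\}$, formula (\ref{eqn:odd}) gives $\delta = \frac{2q_k}{1-q_k}L$, so $1 + 2\delta/L = \frac{1+3q_k}{1-q_k} \geq \frac{1+q_k}{1-q_k}$ and the bound follows. In the interior case $j = 2^\ell m$ or $j = 2^\ell m + 1$ with $m$ odd, the two adjacent gaps have lengths $L_{\text{sm}} := \frac{2q_k}{1-q_k} L$ (by (\ref{eqn:odd})) and $L_{\text{bg}} := \frac{2^{\ell+1} q_{k-\ell}}{\prod_{i=k-\ell}^k (1-q_i)} L$ (by (\ref{eqn:even})), so $\delta = \min(L_{\text{sm}}, L_{\text{bg}})$. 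If $L_{\text{sm}} \leq L_{\text{bg}}$, then $\delta = L_{\text{sm}}$ and the ratio is again $\frac{1+3q_k}{1-q_k} \geq \frac{1+q_k}{1-q_k}$, yielding the first term of the maximum. If $L_{\text{sm}} > L_{\text{bg}}$, then $\delta = L_{\text{bg}}$; the elementary inequality $2^\ell \geq \prod_{i=k-\ell}^{k-1}(1-q_i)$ gives $L_{\text{bg}}/L \geq \frac{2q_{k-\ell}}{1-q_k}$, hence $1 + 2\delta/L \geq 1 + \frac{4 q_{k-\ell}}{1-q_k} \geq \frac{1-q_k+2q_{k-\ell}}{1-q_k}$, yielding the second term.

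The main obstacle I anticipate is selecting the correct annulus in the interior case: the natural off-centered annulus (which works in the boundary case, where one gap is infinite) does not in general fit inside $X(\omega)$ when both adjacent gaps are finite, because its inner disk would then spill across the smaller gap into the neighboring interval. Using the \emph{centered} annulus circumvents this issue, and together with the elementary algebra above it yields the claimed bound.
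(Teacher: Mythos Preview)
The paper does not supply its own proof of this lemma; it is quoted from Kinjo~\cite{Kinjo} and used as a black box. Your argument is correct and self-contained: the Schwarz--Pick inequality $\ell_{X(\omega)}(\gamma)\le\pi/\textrm{Mod}(A)$ for an embedded annulus $A\subset X(\omega)$ in the homotopy class of $\gamma$, applied to the round annulus centered at the midpoint of $I_k^j$ with inner radius $|I_k^j|/2$ and outer radius $|I_k^j|/2+\delta$, gives $\ell_{X(\omega)}(\gamma_k^j)\le 2\pi^2/\log(1+2\delta/|I_k^j|)$. Your identification of the two adjacent gap lengths via (\ref{eqn:odd}) and (\ref{eqn:even}) is accurate in both the boundary and interior cases, and the elementary inequalities $1+4q_k/(1-q_k)\ge(1+q_k)/(1-q_k)$ and $\prod_{i=k-\ell}^{k-1}(1-q_i)<1$ finish the job. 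This annulus-plus-Schwarz-lemma route is the standard way to obtain such upper bounds on hyperbolic lengths, so there is nothing further to compare.
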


	Since both $\omega =(q_n)_{n=1}^{\infty}$ and $\omega'=(q_n')_{n=1}^{\infty}$ are increasing, we may assume that $q_n, q_n'>\delta$ $(n=1, 2, \dots)$ for some $\delta\in (0, 1)$.
	Hence, we obtain
	\begin{equation*}
		\frac{1+q_k}{1-q_k}, \frac{1-q_k+2q_{k-\ell}}{1-q_k}\geq 1+\frac{2\delta}{1-q_k}
	\end{equation*}
	and
	\begin{equation}
	\label{eqn:upperbound}
		\ell _{X(\omega)}(\gamma_k^j)\leq\frac{2\pi^2}{\log\left (1+ \frac{2\delta}{1-q_k}\right )}\leq\frac{2\pi^2}{\log\left (1+ \frac{2\delta}{1-\delta}\right )}.
	\end{equation}
	
	Similarly, we have
	\begin{equation}
		\ell _{X(\omega')}({\gamma'}_k^j)\leq\frac{2\pi^2}{\log\left (1+ \frac{2\delta}{1-q_k'}\right )}\leq\frac{2\pi^2}{\log\left (1+ \frac{2\delta}{1-\delta}\right )}.
	\end{equation}
	In particular, they are bounded above and converge to zero as $n\to \infty$ because $\lim_{n\to \infty}q_n=\lim_{n\to \infty}q'_n=1$.
	From the hyperbolic trigonometry, we see that short geodesics do not pass through any  pair of pants in the canonical pants decomposition.
	Actually, from the collar lemma (cf.~\cite{Buser} Chapter 4), we may obtain the following lemma immediately.
	
\begin{lemma}
	There exists a constant $d=d(\delta)$ depending only on $\delta$ such that the hyperbolic length of any geodesic curve $c$ in $P_k^j$ or ${P'}_k^j$ $(k\in \mathbb N; j\in \{1, 2, \dots , 2^k\})$ connecting two points on the boundary curves is not less than $d$.
\end{lemma}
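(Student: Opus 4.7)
The plan is to combine the just-established uniform upper bound on the lengths of the pants' boundary geodesics with the standard hyperbolic trigonometry of a pair of pants. By the computation preceding the lemma, every boundary geodesic of any $P_k^j$ or ${P'}_k^j$ has hyperbolic length at most
\[
L:=\frac{2\pi^2}{\log\left(1+\frac{2\delta}{1-\delta}\right)},
\]
a constant depending only on $\delta$. Hence each pair of pants under consideration has all three boundary lengths in $(0,L]$, and it suffices to bound from below the length of any geodesic arc in such a pants whose endpoints lie on its boundary and whose interior lies in the open pants.

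I would split into two cases. Suppose first that the arc connects two distinct boundary components $\gamma_i$ and $\gamma_j$. Then its length is at least the distance between them in the pants, which equals the length $|s_{ij}|$ of their common perpendicular. Using the standard right-angled hexagon identity for a pair of pants with boundary lengths $\ell_1,\ell_2,\ell_3$,
\[
\cosh|s_{ij}|=\frac{\cosh(\ell_k/2)+\cosh(\ell_i/2)\cosh(\ell_j/2)}{\sinh(\ell_i/2)\sinh(\ell_j/2)}\ge \coth(\ell_i/2)\coth(\ell_j/2)\ge \coth(L/2)^2>1,
\]
which produces a positive lower bound $d_1(\delta)$ on $|s_{ij}|$ depending only on $\delta$. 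Suppose second that both endpoints lie on a single boundary $\gamma$. Then the arc is homotopically nontrivial relative to $\gamma$: otherwise its lift to the universal cover would join two points of one single lift of $\gamma$, forcing it to coincide with a sub-arc of $\gamma$ and placing its interior on $\gamma$, a contradiction. The collar lemma, applied to $\gamma$ (whose length is at most $L$), yields an embedded collar of $\gamma$ of half-width at least a constant $w(\delta)>0$, and any homotopically nontrivial arc with endpoints on $\gamma$ must exit and re-enter this collar, hence has length at least $2w(\delta)$. Setting $d(\delta):=\min\{d_1(\delta),2w(\delta)\}$ completes the argument.

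The main obstacle is the second case: one has to rule out arbitrarily short geodesic arcs hugging a single boundary component. This is precisely where the homotopy-nontriviality observation, combined with the collar lemma applied to a boundary of bounded length, forces a definite length. The first case is routine once the right-angled hexagon formula is in hand, and throughout the key mechanism is the uniform upper bound $L$ on the boundary geodesic lengths provided by the increasing hypothesis and Kinjo's estimates.
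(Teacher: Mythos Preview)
Your argument is correct and in the same spirit as the paper's: both rest on the uniform upper bound $L=L(\delta)$ for the boundary lengths together with the hyperbolic trigonometry of a pair of pants. The paper simply decomposes each pants into two congruent right-angled hexagons and defers to the argument of Beardon, \emph{The Geometry of Discrete Groups}, Theorem~7.19.2, whereas you spell the estimate out, handling arcs between distinct boundary components via the hexagon identity and arcs returning to the same boundary via the collar lemma (the lift-to-different-lifts observation plus disjointness of the half-collars gives the $2w(\delta)$ bound). This is a legitimate and self-contained way to unpack the citation; the only cosmetic point is that ``exit and re-enter the collar'' is best read as the covering-space statement you indicate just before it, since that is what actually forces the length bound.
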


	\noindent
	{\bf Step 2 : Analyzing the canonical pants decomposition (Part II : Extremal length).}
	
	\noindent
We estimate $\ell_{X(\omega)}(\gamma_k^j)$ and $\ell_{X(\omega')}(\gamma_{k}^j)$ from below.

	 When $j$ is even, $I_k^j$ is located between $J_k^{j-1}$ and $J_k^j$.
	As we have seen in (\ref{eqn:odd}) and (\ref{eqn:even}),
	\begin{equation*}
		|J_k^{j-1}|=\frac{2q_k}{1-q_k}|I_k^j|\leq \frac{2}{1-q_k}|I_k^j|
	\end{equation*}
	and
	\begin{equation*}
		|J_k^j|=\frac{2^{\ell+1}q_{k-\ell}}{\prod_{i=k-\ell}^k(1-q_i)}|I_k^j|\leq 2\left (\frac{2}{1-q_k}\right )^{k}|I_k^j|,
	\end{equation*}
	since $(q_n)_{n=1}^{\infty}$ is increasing.
		 We put $J_k^{j-1}=(x_1,  x_2), I_k^j=[x_2, x_3]$ and $J_k^j=(x_3, x_4)$ $(x_1<x_2<x_3<x_4)$.
	 
	 We consider a four-punctured sphere $X_{k, j}:=\widehat{\mathbb C}\setminus \cup_{i=1}^4 x_i$ and we denote by $\Gamma_0$ the set of closed curves on $X_{k, j}$ which are homotopic to $\gamma_{k}^j$ in $X_{k, j}$.
	 Now, we consider a holomorphic quadratic differential $\varphi_0$ on $X_{k, j}$ defined by
	 \begin{equation*}
	 	\varphi_0:=\frac{dz^2}{(z-x_1)(z-x_2)(z-x_3)(z-x_4)}.
	 \end{equation*}
	 We see that any closed horizontal trajectory of $\varphi_0$  belongs to $\Gamma_0$.
	 We also see that the set of closed horizontal trajectories of $\varphi_0$ forms  a ring domain $A_{k, j}:=\widehat{\mathbb C}\setminus ((-\infty, x_1]\cup [x_4, +\infty)\cup\{\infty\}\cup [x_2, x_3])$.
	 Hence, from Proposition \ref{Pro:Jenkins-Strebel}, we have
	 \begin{equation}
	 	\lambda (\Gamma_0)=(\textrm{Mod}(A_{k, j}))^{-1}.
	 \end{equation}

	  It is seen that the ring domain $A_{k, j}$ is conformally equivalent to a ring domain $B_{k, j}:=\widehat{\mathbb C}\setminus ([-\infty, -\alpha]\cup [0, 1]\cup [\beta, +\infty])$, where $\alpha=|J_{k}^{j-1}|/|I_{k}^j|\leq 2/(1-q_k)$ and $\beta=1+|J_k^j|/|I_k. j|\leq 1+2(2/(1-q_k))^k$.
	  Therefore,
	  \begin{equation*}
	  	\textrm{Mod}(A_{k, j})=\textrm{Mod}(B_{k, j}).
	  \end{equation*}
	 
	 For $\alpha'=2/(1-q_k)$ and $\beta' = 1+2(2/(1-q_k))^k=1+2\alpha'^k$, the ring domain $A_{k, j}':=\widehat{\mathbb C}\setminus ([-\infty, -\alpha']\cup [0, 1]\cup [\beta', +\infty])$ contains $B_{j, k}$ and we obtain
	 
	 \begin{equation*}
	 	\textrm{Mod}(B_{k, j})\leq \textrm{Mod}(A_{k, j}').
	 \end{equation*}
	 Furthermore, by taking a M\"obius transformation, we see that the domain $A_{k, j}'$ is conformally equivalent to a Teichm\"uller domain $A_{j, k}'':=\mathbb C\setminus ([-1, 0]\cup [\alpha' (\beta '-1)/(\alpha'+\beta'), +\infty)$ and
	 \begin{equation*}
	 	\textrm{Mod}(A_{k, j}')=\textrm{Mod}(A_{k, j}'').
	 \end{equation*}
	 
	 Now, we introduce a function $\Psi (t)$ which represents the modulus of the Teichm\"uller domain $T(t):=\mathbb C\setminus ([-1, 0]\cup [t, +\infty)$ for $t>0$.
	 Actually, $\textrm{Mod}(T(t))$, the modulus of $T(t)$ is $\log \Psi (t)$.
	 As for $\Psi (t)$, the following is known:
	 \begin{Pro}[\cite{Hariri-Klen-Vuorinen}]
	 	For $t>0$, $\Psi (t)$ is a strictly increasing function and
	 	\begin{equation*}
	 	t+1\leq	(\sqrt{t+1}+\sqrt{t})^2\leq \Psi (t)\leq 4(\sqrt{t+1}+\sqrt{t})^2\leq 16(t+1).
	 	\end{equation*}
	 \end{Pro}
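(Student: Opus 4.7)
The plan is to split the claim into three parts: the two outer elementary inequalities (pure algebra), strict monotonicity of $\Psi$ (domain comparison of moduli), and the central analytic bounds relating $\Psi(t)$ to $(\sqrt{t+1}+\sqrt{t})^2$.

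The leftmost inequality $t+1\leq(\sqrt{t+1}+\sqrt{t})^2$ follows by squaring $\sqrt{t+1}+\sqrt{t}\geq\sqrt{t+1}$; the rightmost $4(\sqrt{t+1}+\sqrt{t})^2\leq 16(t+1)$ follows from $\sqrt{t+1}+\sqrt{t}\leq 2\sqrt{t+1}$ squared and multiplied by $4$. Strict monotonicity of $\Psi$ reduces to strict monotonicity of $\textrm{Mod}(T(t))$ in $t$: for $t_1<t_2$ the ring domain $T(t_1)$ is strictly contained in $T(t_2)$, their complementary components $[-1,0]$ (common) and $[t,+\infty)\cup\{\infty\}$ (strictly nested) correspond, so the standard domain-monotonicity of modulus under nested complements gives $\textrm{Mod}(T(t_1))<\textrm{Mod}(T(t_2))$ and hence $\Psi(t_1)<\Psi(t_2)$.

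For the nontrivial central inequalities $(\sqrt{t+1}+\sqrt{t})^2 \leq \Psi(t) \leq 4(\sqrt{t+1}+\sqrt{t})^2$, I would exploit the reflection symmetry of $T(t)$ across $\mathbb{R}$ in the same fashion as Lemma \ref{lemma:symmetric}: the symmetric ring $T(t)$ has modulus exactly twice that of the upper-half quadrilateral $(\mathbb{H};-1,0,t,\infty)$. A Schwarz--Christoffel uniformization then represents $\textrm{Mod}(T(t))$ as a ratio $K(k)/K'(k)$ of complete elliptic integrals, with modular parameter $k=k(t)$ governed by the cross-ratio of $\{-1,0,t,\infty\}$. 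Applying classical bounds for the Gr\"otzsch capacity function $\mu(r)=\frac{\pi}{2}K'(r)/K(r)$ and converting via the substitution linking $k(t)$ to $\sqrt{t+1}+\sqrt{t}$ then yields the announced inequalities.

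The main obstacle is pinning down the sharp constant $4$ in the upper bound $\Psi(t)\leq 4(\sqrt{t+1}+\sqrt{t})^2$. A purely geometric approach---produce an admissible conformal metric on $T(t)$ to lower-bound the extremal length of the separating curve family (hence upper-bound $\textrm{Mod}(T(t))$), and a thick inscribed round annulus to give the reverse bound---recovers the correct order $\Psi(t)\asymp t$, but extracting the factor $4$ cleanly seems to require the elliptic-integral route together with the classical asymptotic $K(k)=\log(4/k')+O(k'^2\log(1/k'))$ as $k\to 1^-$. This is where the Hariri--Kl\'en--Vuorinen capacity estimates do the real work.
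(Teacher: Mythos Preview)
The paper does not prove this proposition at all: it is quoted with the citation \cite{Hariri-Klen-Vuorinen} and used as a black box. So there is no ``paper's own proof'' to compare against; your sketch already goes further than the paper does.

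As for the sketch itself: the elementary outer inequalities and the strict-monotonicity argument are fine. For the central bounds you have correctly identified the mechanism (reflection symmetry reduces $\textrm{Mod}(T(t))$ to a quadrilateral modulus, hence to a ratio of complete elliptic integrals, and then one invokes the classical Gr\"otzsch capacity bounds $\log(1/r)\leq \mu(r)\leq \log(4/r)$). You are also right that a crude geometric metric/annulus argument only gives $\Psi(t)\asymp t$; the specific constants $1$ and $4$ come precisely from the sharp inequalities for $\mu(r)$, which is what the cited reference supplies. So your proposal is an accurate outline of the standard proof, with the honest caveat that the sharp constant requires the elliptic-integral estimates rather than soft geometry.
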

	 Therefore, we have
	 \begin{equation*}
	 	\textrm{Mod}(A_{k, j}'')=\textrm{Mod}(T(\alpha' (\beta' -1)/(\alpha' +\beta')))\leq \log \left \{16\left (\frac{\alpha' (\beta' -1)}{\alpha'+\beta'}+1\right )\right \}.
	 \end{equation*}
	 	 In fact, since $\beta'=2 \alpha'^k+1$, we have
	 \begin{eqnarray*}
	 	\frac{\alpha'(\beta'-1)}{\alpha'+\beta'}+1&=&\frac{2\alpha'^{k+1}+2\alpha'^k+\alpha'+1}{2\alpha'^k+\alpha'+1} \\
	 	&=&\alpha'+1-\frac{\alpha'^2+\alpha}{2\alpha'^k+\alpha'+1}\leq \alpha'+1.
	 \end{eqnarray*}
	 and
	 \begin{equation*}
	 	\log \left \{16\left (\frac{\alpha' (\beta' -1)}{\alpha'+\beta'}+1\right )\right \}\leq 4\log 2+\log (\alpha'+1).
	 \end{equation*}
	 Hence, we have
	 \begin{equation*}
	 	\textrm{Mod}(A_{k, j}'')\leq 4\log 2+\log \left (\frac{2}{1-q_k}+1\right )\leq 6\log2-\log (1-q_k).
	 \end{equation*}
	 Thus, we obtain
	 \begin{equation}
	 	\textrm{Mod}(A_{k, j})\leq \textrm{Mod}(A_{k, j}'')\leq 6\log 2-\log (1-q_k)
	 \end{equation}
	 and
	 \begin{equation*}
	 	\lambda (\Gamma_0)=\left  (\textrm{Mod}(A_{k, j})\right )^{-1}\geq\frac{1}{6\log 2-\log (1-q_k)}.
	 \end{equation*}
	 It follows from (\ref{eqn:Maskit}) that
	 \begin{equation*}
	 	2\lambda (\Gamma_0)\leq \ell_{X_{k, j}}(\gamma_0)\exp\{\ell_{X_{k, j}}(\gamma_0)/2\},
	 \end{equation*}
	 where $\gamma_0$ is the hyperbolic geodesic in $\Gamma_0$.
	 Since $X_{k, j}\supset X(\omega)$, we obtain
	 \begin{equation*}
	 	\ell_{X_{k, j}}(\gamma_0)\leq\ell_{X(\omega)}(\gamma_{k}^j)
	 \end{equation*}
	 and
	 \begin{equation*}
	 	2\lambda (\Gamma_0)\leq \ell_{X_{k, j}}(\gamma_0)\exp\{\ell_{X_{k, j}}(\gamma_0)/2\}\leq \ell_{X(\omega)}(\gamma_{k}^j)\exp\{\ell_{X(\omega)}(\gamma_{k}^j)/2\}.
	 \end{equation*}
	 
	 On the other hand, we have seen in (\ref{eqn:upperbound}) that
	 \begin{equation*}
	 	\frac{1}{2}\ell_{X(\omega)}(\gamma_k^j)\leq\frac{\pi^2}{\log\left (1+\frac{2\delta}{1-\delta}\right )}=:C(\delta).
	 \end{equation*}
	 Therefore, we have
	 \begin{equation*}
	 	\ell_{X(\omega)}(\gamma_k^j)\geq 2e^{-C(\delta)}\lambda (\Gamma_0)\geq \frac{2e^{-C(\delta)}}{6\log 2-\log (1-q_k)}.
	 \end{equation*}
	 
	 Similarly, we have
	 \begin{equation*}
	 	\ell_{X(\omega')}({\gamma'}_k^j)\geq \frac{2e^{-C(\delta)}}{6\log 2-\log (1-q'_k)}.
.
	 \end{equation*}
	 
	 When $j$ is odd, we see that the above argument still works and we obtain the same estimates as above.
	 
	 Thus, we get the following estimates.
	 \begin{equation}
	 \label{eqn:n}
	 		 \frac{2e^{-C(\delta)}}{6\log 2-\log (1-q_k)}\leq
\ell _{X(\omega)}(\gamma_k^j)\leq\frac{2\pi^2}{\log\left (1+ \frac{2\delta}{1-q_k}\right )},
	 \end{equation}
	 and
	\begin{equation}
	\label{eqn:n'}
		 \frac{2e^{-C(\delta)}}{6\log 2-\log (1-q'_k)}\leq
\ell _{X(\omega)}({\gamma'}_k^j)\leq\frac{2\pi^2}{\log\left (1+ \frac{2\delta}{1-q_k'}\right )}
	\end{equation} 
	for each $j\in \{1, 2, \dots , 2^k\}$.
	
	We compare the left-hand side and the right-hand side of each other.
	\begin{eqnarray*}
		R(q_k, q_k')&:=&\frac{\pi ^2e^{C(\delta)}(6\log 2-\log (1-q'_k))}{\log \left (1+\frac{2\delta}{1-q_k}\right )} \\
		&=&\pi ^2 e^{C(\delta)}\frac{\log (1-q_k')}{\log (1-q_k)}\cdot \frac{\frac{6\log 2}{\log (1-q_k')}-1}{\frac{\log (2\delta+1-q_k)}{\log (1-q_k)}-1}.
	\end{eqnarray*}
	and
	\begin{eqnarray*}
		R(q_k', q_k)&:=&\frac{\pi ^2 e^{C(\delta)}(6\log 2-\log (1-q_k))}{\log \left (1+\frac{2\delta}{1-q_k'}\right )} \\
		&=&\pi ^2e^{C(\delta)}\frac{\log (1-q_k)}{\log (1-q_k')}\cdot \frac{\frac{6\log 2}{\log (1-q_k')}-1}{\frac{\log (2\delta+1-q_k')}{\log (1-q_k')}-1}.
	\end{eqnarray*}

Thus, we have the following :
\begin{lemma}
\label{lemma:Compare}
	If $q_k, q_k'\geq \delta>0$, then
	$\inf _{k\in \mathbb N}R(q_k, q_k')>0$ if and only if
	$$
	\inf _{k\in \mathbb N}\frac{\log (1-q_k')}{\log (1-q_k)}>0
	$$
	and $\inf_{k\in \mathbb N}R(q_k', q_k)> 0$ if and only if
	$$
	\inf  _{k\in \mathbb N}\frac{\log (1-q_k)}{\log (1-q_k')}>0.
	$$

\end{lemma}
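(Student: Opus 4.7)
The plan is to show that $R(q_k,q_k')$ equals $2\pi e^{C(\delta)}$ times the ratio $\log(1-q_k')/\log(1-q_k)$ times a correction factor that is bounded above and below by positive constants depending only on $\delta$. The analogous equivalence for $R(q_k',q_k)$ then follows by interchanging the roles of primed and unprimed letters. Setting $a_k:=6\log 2/\log(1-q_k')$ and $b_k:=\log(2\delta+1-q_k)/\log(1-q_k)$, the formula in the excerpt already gives
\[
R(q_k,q_k')=2\pi e^{C(\delta)}\cdot\frac{\log(1-q_k')}{\log(1-q_k)}\cdot\frac{a_k-1}{b_k-1},
\]
so it suffices to show that $(a_k-1)/(b_k-1)$ lies in a fixed compact subinterval of $(0,\infty)$ depending only on $\delta$. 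The bound on $a_k-1$ is immediate: since $q_k'\in[\delta,1)$ gives $\log(1-q_k')\in(-\infty,\log(1-\delta)]$, we have $a_k\in[6\log 2/\log(1-\delta),0)$, so $a_k-1$ ranges over a compact interval of strictly negative numbers.

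The main step is to bound $b_k-1$ away from $0$ and from $-\infty$. Rewriting
\[
b_k-1=\frac{\log(2\delta+1-q_k)-\log(1-q_k)}{\log(1-q_k)}=-\frac{\log\bigl(1+2\delta e^u\bigr)}{u},\quad u:=-\log(1-q_k)\in[u_0,\infty),
\]
with $u_0:=-\log(1-\delta)>0$, I reduce the problem to studying $g(u):=\log(1+2\delta e^u)/u$ on $[u_0,\infty)$. This function is continuous and strictly positive there, and the expansion $\log(1+2\delta e^u)=u+\log(2\delta)+O(e^{-u})$ as $u\to\infty$ gives $g(u)\to 1$. Consequently $g$ attains a positive minimum and is bounded above on $[u_0,\infty)$, which yields a compact interval of strictly negative values for $b_k-1$ that depends only on $\delta$.

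Combining the two bounds shows that $(a_k-1)/(b_k-1)$ takes values in a compact subinterval of $(0,\infty)$ depending only on $\delta$, and the first equivalence of the lemma follows. The second equivalence is obtained by the identical argument after swapping $q_k$ and $q_k'$. The one delicate point I anticipate is the lower bound $|b_k-1|\geq c>0$: a priori the quotient $b_k$ could approach $1$ and make the denominator vanish, and it is precisely the asymptotic $\log(1+x)\sim\log x$ as $x\to\infty$ that prevents this by forcing $g(u)\to 1$ rather than to $0$ or $\infty$.
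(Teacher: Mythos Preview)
Your argument is correct and matches the paper's approach: the paper derives the factorization
\[
R(q_k,q_k')=2\pi e^{C(\delta)}\,\frac{\log(1-q_k')}{\log(1-q_k)}\cdot\frac{a_k-1}{b_k-1}
\]
and then simply states the lemma with ``Thus, we have the following,'' leaving the verification that the correction factor is bounded above and below by positive constants to the reader; you have supplied exactly that verification. One cosmetic remark: the range of $a_k-1$ is $[6\log 2/\log(1-\delta)-1,\,-1)$, which is not literally compact, but your intended conclusion---that $a_k-1$ is bounded and bounded away from $0$---is correct and is all that is needed.
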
	

\begin{Rem}
	E. Kinjo (\cite{Kinjo}) has estimated $\ell_{X(\omega)}(\gamma_k^j)$ from below.
	Actually, she shows
	\begin{equation*}
		\ell_{X(\omega)}(\gamma_k^j)\geq 2\sinh^{-1}\left (1/\sinh \left (\frac{\pi^2}{\log \left (\frac{1+q_k}{2q_k}\right )}\right )\right ).
	\end{equation*}
	She has used a different method from ours, while the above estimate is weaker than ours.
\end{Rem}

\noindent
{\bf Step 3 : Looking at the images of geodesics by quasiconformal maps.}	

\noindent
Since we are supposing  (\ref{eqn:Log+}), there exists a subsequence $\{n_k\}_{k=1}^{\infty}$ such that $\lim_{k\to \infty}q_{n_k}=\lim_{k\to \infty}q'_{n_k}=1$ and
\begin{equation}
\label{eqn:limit}
		\lim_{k\to\infty}  \left |\log \frac{\log (1-q_{n_k})}{\log (1-q_{n_k'})}\right |=\infty.
\end{equation}
In particular, $\lim_{k\to \infty}\ell ({\gamma'}_{n_k}^j)=0$ by (\ref{eqn:n'}).

Suppose that there exists a $K$-quasiconformal mapping $\varphi : \mathbb C\to \mathbb C$ with $\varphi (E(\omega'))=E(\omega)$.
By Wolpert's lemma (\cite{Wolpert}), we have
\begin{equation*}
	\ell_{X(\omega)}([\varphi ({\gamma}'_{n_k})])\leq K\ell_{X(\omega')}({\gamma'}_{n_k}^j),
\end{equation*}
where $[\varphi (\gamma_{n_k}^j)]$ is the geodesic in $X(\omega)$ homotopic to $\varphi ({\gamma'}_{n_k}^j)$. 

Hence, $\lim_{k\to \infty}\ell_{X(\omega)}([\varphi ([\gamma_{n_k}^j])=0$ and $\ell_{X(\omega)}([\varphi ([\gamma_{n_k}^j])<d$ if $k$ is sufficiently large, where $d>0$ is a constant given in Lemma 4.2.
Therefore, the geodesic $[\varphi ({\gamma'}_{n_k}^j)]$ cannot pass through any pair of pants of the pants decomposition of $X(\omega)$.
Thus, we obtain:
\begin{lemma}
\label{lemma:Its boundary}
	If $k$ is sufficiently large, then $[\varphi ({\gamma'}_{n_k}^j)]$ is a boundary curve of a pair of pants of the canonical pants decomposition of $X(\omega)$.
\end{lemma}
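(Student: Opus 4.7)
The plan is to combine Wolpert's lemma with the uniform lower bound of Lemma 4.2. By Wolpert's inequality applied to the $K$-quasiconformal map $\varphi$ one has
\[
\ell_{X(\omega)}([\varphi({\gamma'}_{n_k}^j)])\le K\,\ell_{X(\omega')}({\gamma'}_{n_k}^j),
\]
and by the upper bound in (\ref{eqn:n'}) the right-hand side tends to $0$, since $q'_{n_k}\to 1$ along the chosen subsequence. Fix $d=d(\delta)$ as in Lemma 4.2 and choose $k$ large enough that $\gamma:=[\varphi({\gamma'}_{n_k}^j)]$ satisfies $\ell_{X(\omega)}(\gamma)<d$. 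The remaining task is to show that any closed geodesic on $X(\omega)$ of length less than $d$ must coincide with one of the boundary geodesics $\gamma_m^i$ of the canonical pants decomposition.

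I would argue this by contradiction: suppose $\gamma$ is distinct from every $\gamma_m^i$. First observe that $\gamma$ is simple, because ${\gamma'}_{n_k}^j$ is simple, $\varphi$ is a homeomorphism, and the geodesic representative of a simple closed curve on a hyperbolic surface is simple. If $\gamma$ were contained in the open interior of a single pair of pants $P_m^i$, then as a simple closed geodesic in the hyperbolic pair of pants with geodesic boundary it would have to be freely homotopic to one of the three boundary components; however the geodesic representative of such a boundary-parallel class is the corresponding boundary component itself, which is not in the open interior. This contradiction forces $\gamma$ to meet $\bigcup_{m,i}\gamma_m^i$. Since distinct hyperbolic geodesics cross transversely and $\gamma$ is compact, this intersection is finite and nonempty.

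Removing the intersection points decomposes $\gamma$ into a nonempty finite family of geodesic arcs, each lying in the closure of a single pair of pants $P_m^i$ with both endpoints on $\partial P_m^i$. Lemma 4.2 then bounds the length of each such arc below by $d$, yielding $\ell_{X(\omega)}(\gamma)\ge d$, contradicting the choice of $k$. Hence $\gamma$ must equal some $\gamma_m^i$, which is the claim of the lemma. The main obstacle is the sub-claim ruling out a simple closed geodesic strictly in the interior of a pair of pants; this reduces to the standard topological fact that every essential simple closed curve in a pair of pants is boundary-parallel, together with the uniqueness of geodesic representatives on a hyperbolic surface with geodesic boundary.
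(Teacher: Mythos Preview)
Your argument is correct and follows the same route as the paper: apply Wolpert's lemma together with the upper bound (\ref{eqn:n'}) to force $\ell_{X(\omega)}([\varphi({\gamma'}_{n_k}^j)])<d$, and then invoke Lemma~4.2 to conclude the geodesic cannot cross any pants curve. The paper states this last implication in one line (``cannot pass through any pair of pants''), whereas you spell out the missing dichotomy---either $\gamma$ lies in a single pair of pants (forcing it to be boundary-parallel, hence equal to a $\gamma_m^i$) or it meets some $\gamma_m^i$ transversely (forcing length $\ge d$)---which is a welcome addition but not a different method.
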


\noindent
{\bf Step 4 : Comparing the lengths of geodesics.}

\noindent
From (\ref{eqn:limit}), the following two cases occurs.

\begin{equation*}
\mbox{Case I : }\quad
	\frac{\log (1-q_{n_k})}{\log (1-q'_{n_k})}\to 0 \quad 
	(k\to \infty)
	\end{equation*}
	or
\begin{equation*}
\mbox{Case II : }\quad
	\frac{\log (1-q'_{n_k})}{\log (1-q_{n_k})}\to 0 \quad (k\to \infty).
	\end{equation*}	 

\noindent
{\bf Case I.}
From Lemma \ref{lemma:Compare}, $R(q_{n_k}', q_{n_k})< K^{-1}$ for a sufficiently large $k$.
This implies that
\begin{eqnarray*}
	\ell_{X(\omega)}([\varphi ({\gamma'}_{n_k}^j)])&\leq& K\ell_{X(\omega')}({\gamma'}_{n_k}^j)\leq\frac{2\pi^2 K}{\log\left (1+ \frac{2\delta}{1-q_{n_k}'}\right )} \\
	&=& KR(q'_{n_k}, q_{n_k})\cdot\frac{2e^{C(\delta)}}{6\log 2-\log (1-q_k)}\\
	&\leq& K R(q'_{n_k}, q_{n_k})\ell_{X(\omega)}(\gamma_{n_k}^m)<\ell_{X(\omega)}(\gamma_{n_k}^m)
\end{eqnarray*}
for any $j, m\in \{1, 2, \dots , 2^{n_k}\}$.

Moreover, since $(q_n)_{n\in \mathbb N}$ is increasing, for any $n<n_k$, $q_n\leq q_{n_k}$ and
\begin{equation*}
	0<-\log \left (1-q_n\right )\leq-\log \left (1-q_{n_k}\right )
\end{equation*}
holds.
Therefore, we also obtain
\begin{eqnarray*}
	\ell_{X(\omega)}([\varphi ({\gamma'}_{n_k}^j)])	&\leq & KR(q'_{n_k}, q_{n_k})\cdot\frac{2e^{C(\delta)}}{6\log 2-\log \left (1-q_{n_k}\right )}\\
	&\leq & KR(q'_{n_k}, q_{n_k})\cdot\frac{2e^{C(\delta)}}{6\log 2-\log \left (1-q_n \right )}<\ell_{X(\omega)}(\gamma_{n}^m)
\end{eqnarray*}
for $n<n_k$ and $m\in\{1, 2, \dots , 2^n\}$.

Therefore, we see that the geodesic $[\varphi ({\gamma'}_{n_k}^j)]$, which is a boundary of a pair of pants in the canonical pants decomposition for $\omega$, has to be in the inside of a topological disk bounded by $\gamma_{n_k}^i$ of some $i\in \{1, 2, \dots , 2^{n_k}\}$.

\noindent
{\bf Case II.} From Lemma \ref{lemma:Compare}, $R(q_{n_k}, q_{n_k}')< K^{-1}$ for a sufficiently large $k$.
By using the same argument as in Case I above, we obtain
\begin{equation*}
	\ell_{X(\omega)}([\varphi({\gamma'}_{n_k}^j)])>\ell_{X(\omega)}(\gamma_{n_k}^m)
\end{equation*}
for any $j, m\in \{1, 2, \dots , 2^{n_k}\}$, and
\begin{equation*}
	\ell_{X(\omega)}([\varphi({\gamma'}_{n_k}^j)])>\ell_{X(\omega)}(\gamma_{n_k}^m)
\end{equation*}
for $n>n_k$ and $m\in\{1, 2, \dots , 2^n\}$.

Therefore, we see that the geodesic $[\varphi ({\gamma'}_{n_k}^j)]$,  has to be in the outside of any topological disk bounded by $\gamma_{n_k}^i$ $(i\in \{1, 2, \dots , 2^{n_k}\})$.

\medskip

\noindent
{\bf Step 5 : Using a topological argument to get a contradiction.}
Let $D_{n_k, j}$ (resp.~ $D'_{n_k, j}$) be the disk bounded by $\gamma_{n_k}^j$ (resp.~${\gamma'}_{n_k}^j$).
We put $W_{n_k, j}:=\widehat{\mathbb C}\setminus\cup_{j=1}^{2^{n_k}}\overline{D_{n_k,j}}$ and $W'_{n_k, j}:=\widehat{\mathbb C}\setminus\cup_{j=1}^{2^{n_k}}\overline{D'_{n_k, j}}$.
Both $W_{n_k, j}$ and $W'_{n_k, j}$ are planar domains bounded by  mutually disjoint $2^{n_k}$ simple closed curves and
\begin{equation*}
	W_{n_k, j}\cap E(\omega)=\emptyset, \quad W'_{n_k, j}\cap E(\omega')=\emptyset.
\end{equation*}

Since $\varphi : \mathbb C\to \mathbb C$ is a homeomorphism and $\varphi (E(\omega'))=E(\omega)$, the image $\varphi (W'_{n_k, j})$ is also a domain bounded by $2^{n_k}$ simple closed curves $\varphi ({\gamma'}_{n_k}^j)$ $(j=1, 2, \dots , 2^{n_k})$ and $\varphi (W'_{n_k, j})\cap E(\omega)=\emptyset$.

Let $\widehat{W}_{n_k, j}$ be a domain bounded by $[\varphi ({\gamma'}_{n_k}^j)]$ $(j=1, 2, \dots , 2^{n_k})$.
Since $[\varphi ({\gamma'}_{n_k}^j)]$ is homotopic to $\varphi ({\gamma'}_{n_k}^j)$, $\widehat{W}_{n_k, j}$ has the same property as that of $\varphi (W'_{n_k, j})$.
Namely, $\widehat{W}_{n_k, j}\cap E(\omega)=\emptyset$.
Moreover, $[\varphi ({\gamma'}_{n_k}^j)]$ is a boundary of a pair of pants of the canonical pants decomposition of $X(\omega)$ for a sufficiently large $k$ (Lemma \ref{lemma:Its boundary}).
Now, let us consider Case I in Step 3.

As we have seen there, each geodesic $[\varphi ({\gamma'}_k^j)]$ is in the inside of some $D_{n_k, i}$ $(1\leq i, j\leq 2^{n_k})$.
Furthermore, $[\varphi ({\gamma'}_k^j)]\not=\partial D_{n_k, i}=\gamma_{n_k}^i$ because $\ell_{X(\omega)}([\varphi ({\gamma'}_{n_k}^j)])<\ell_{X(\omega)}(\gamma_{n_k}^i)$.
Hence, the simply connected domain $D_{n_k, i}$ contains at least two geodesics in $\partial\widehat{W}_{n_k, j}$, otherwise $\widehat{W}_{n_k, j}\cap E(\omega)\not=\emptyset$.
This is absurd since both $W_{n_k, j}$ and $\widehat{W}_{n_k, j}$ are bounded by the same number of simple closed curves and they do not include any point in $E(\omega)$.

We may use the same argument for Case II and have a contradiction as well.
Thus, we have completed the proof of Theorem \ref{Thm:condition}.

\section{Proof of Theorem \ref{Thm:uncountable}}
The proof of Theorem \ref{Thm:uncountable} is done by giving an example.

For each $\alpha\in (1, +\infty)$, we put
\begin{equation*}
	q_n(\alpha)=1-\exp\left (-{n}^\alpha \right )
\end{equation*}
and $\omega (\alpha)=(q_n (\alpha))_{n=1}^{\infty}\in \Omega$.
Then, $\omega (\alpha)$ is increasing and $\lim_{n\to \infty}q_n(\alpha)=1$.
Furthermore, for $1\leq \alpha' <\alpha<+\infty$,
\begin{equation*}
	\frac{\log (1-q_n(\alpha))}{\log (1-q_n(\alpha'))}=\left ({n}\right )^{\alpha-\alpha'}\to \infty \quad (n\to \infty).
\end{equation*}
Hence, $\mathcal M(\omega(\alpha))\not=\mathcal M(\alpha')$ from Theorem \ref{Thm:condition} and we conclude that $\{\mathcal M(\alpha)\}_{\alpha\in (1, +\infty)}$ is uncountable.

\end{document}